\pgfplotsset{width=10cm,compat=1.9}
\g@addto@macro\bfseries{\boldmath}
\def\@setauthors{%
  \begingroup
  \def\thanks{\protect\thanks@warning}%
  \trivlist
  \centering\footnotesize \@topsep30\p@\relax
  \advance\@topsep by -\baselineskip
  \item\relax
  \author@andify\authors
  \def\\{\protect\linebreak}

  \normalsize\lowercase{\authors}%
  
	\ifx\@empty\contribs
  \else
    ,\penalty-3 \space \@setcontribs
    \@closetoccontribs
  \fi
  \endtrivlist
  \endgroup
}
\def\@settitle{\begin{center}
\LARGE\lowercase{\@title}
  \end{center}%
}
\newcommand{\authoremail}[1]{\email{\href{mailto:#1}{\color{lightblue}{#1}}}}
\newcommand{\authoraddress}[1]{\address{\normalfont{#1}}}
\numberwithin{equation}{section}
\newtheorem{thm}{Theorem}[section]
\newtheorem{lma}[thm]{Lemma}
\newtheorem{cor}[thm]{Corollary}
\newtheorem{prop}[thm]{Proposition}
\newtheorem{conj}[thm]{Conjecture}
\renewcommand{\epsilon}{\varepsilon}
\newcommand{\eps}{\varepsilon}
\newcommand{\rd}{\mathbb{R}^d}
\renewcommand{\geq}{\geqslant}
\renewcommand{\leq}{\leqslant}
\newcommand{\frd}{\dim_{\textup{Fr}}}
\newcommand{\fs}{\dim^\theta_{\mathrm{F}}}
\newcommand{\fd}{\dim_{\mathrm{F}}}
\newcommand{\sd}{\dim_{\mathrm{S}}}
\newcommand{\R}{\mathbb{R}}
\newcommand{\C}{\mathbb{C}}
\newcommand{\N}{\mathbb{N}}
\newcommand{\spt}{\text{spt}\,}
\newcommand{\J}{\mathcal{J}}
\DeclareRobustCommand\widecheck[1]{{\mathpalette\@widecheck{#1}}}
\def\@widecheck#1#2{%
    \setbox\z@\hbox{\m@th$#1#2$}%
    \setbox\tw@\hbox{\m@th$#1%
       \widehat{%
          \vrule\@width\z@\@height\ht\z@
          \vrule\@height\z@\@width\wd\z@}$}%
    \dp\tw@-\ht\z@
    \@tempdima\ht\z@ \advance\@tempdima2\ht\tw@ \divide\@tempdima\thr@@
    \setbox\tw@\hbox{%
       \raise\@tempdima\hbox{\scalebox{1}[-1]{\lower\@tempdima\box
\tw@}}}%
    {\ooalign{\box\tw@ \cr \box\z@}}}
\newcommand\reallywidehat[1]{%
\savestack{\tmpbox}{\stretchto{%
  \scaleto{%
    \scalerel*[\widthof{\ensuremath{#1}}]{\kern.1pt\mathchar"0362\kern.1pt}%
    {\rule{0ex}{\textheight}}
  }{\textheight}% 
}{2.4ex}}%
\stackon[-6.9pt]{#1}{\tmpbox}%
}
\definecolor{lightblue}{HTML}{2B77A4}
\colorlet{plotblue}{LightSkyBlue3!80}
\definecolor{darkred}{HTML}{9E0D0D}
\definecolor{purp}{HTML}{d603a9}
\definecolor{dartmouthgreen}{HTML}{00A64F}
\definecolor{Junglegreen}{HTML}{00A99A}
\definecolor{yellowcolour}{HTML}{f07c02}
\title{$L^2$ restriction estimates from the Fourier spectrum}
\author{Marc Carnovale}
\author{Jonathan M. Fraser}
\thanks{JMF was  financially supported by a  \emph{Leverhulme Trust Research Project Grant} (RPG-2023-281)  and an \emph{EPSRC Standard Grant} (EP/Y029550/1).}
\author{Ana E. de Orellana}
\thanks{AEdO was financially supported by the University of St Andrews.}
\date{}
\begin{document}
\thispagestyle{empty}

\begin{abstract}
The Stein--Tomas  restriction theorem is an important  result in Fourier restriction theory.  It gives a range of $q$  for which $L^q\to L^2$ restriction estimates hold for a given measure, in terms of the Fourier and Frostman dimensions of the measure. We generalise this result by using the Fourier spectrum; a family of dimensions that interpolate between the Fourier and Sobolev dimensions for measures.  This gives us a continuum of Stein--Tomas type estimates, and optimising over this continuum gives a new $L^q\to L^2$ restriction theorem which often outperforms the Stein--Tomas result.  We also provide results in the other direction by giving a range of $q$ in terms of the Fourier spectrum for which $L^q\to L^2$ restriction estimates fail, generalising an observation  of Hambrook and {\L}aba.  We illustrate our results with several examples, including the surface measure on the cone, the moment curve, and several fractal measures. \\ \\
  \emph{Mathematics Subject Classification}: primary: 42B10, 28A80; secondary: 42B20, 28A75, 28A78.
\\
\emph{Key words and phrases}:  restriction problem, Fourier restriction, Fourier transform, Fourier dimension, Fourier spectrum, Frostman dimension.
\end{abstract}
\maketitle
\tableofcontents

\section{Introduction}

\subsection{The restriction problem} \label{sec:restrictionIntro}

Given a non-zero, finite, compactly supported, Borel measure $\mu$ on $\rd$, the celebrated \emph{restriction problem} asks when is it meaningful to restrict the Fourier transform of a function to the support of $\mu$.  More precisely,  for which $p,q \in [1,\infty]$  does it hold that
\begin{equation}\label{eq:restriction}
    \|\widehat{f}\|_{L^{p'}(\mu)} \lesssim \|f\|_{L^{q'}(\rd)},
\end{equation}
for a uniform constant $C \geq 1$ for all $f\in L^{q'}(\rd)$.  Here and throughout the paper we write $A\lesssim B$ or $A\gtrsim B$ if there exists a uniform constant $C>0$ such that $A\leq CB$ or $A\geq CB$ respectively, and $A\approx B$ if both $A\lesssim B$ and $A\gtrsim B$ hold. If we wish to emphasise that $C$ depends on some parameter $\lambda$, it will be denoted with a subscript as $\lesssim_{\lambda}$, $\gtrsim_{\lambda}$, or $\approx_{\lambda}$. Also, for   $p,q\in[1,\infty]$, we write $p'$ and $q'$ to refer to their conjugate exponents, i.e. they will satisfy $\frac{1}{p}+ \frac{1}{p'} = \frac{1}{q} + \frac{1}{q'} = 1$.

By duality of $L^p$ spaces, \eqref{eq:restriction} is equivalent to
\begin{equation*}
    \|\widehat{f\mu}\|_{L^{q}(\rd)}\lesssim \|f\|_{L^p(\mu)}
\end{equation*}
with \eqref{eq:restriction} referred to as an $L^{q'} \to L^{p'}$ restriction estimate and the dual form as  an $L^{p} \to L^{q}$ extension estimate. What is more, if $p = 2$, both restriction and extension estimates are equivalent to
\begin{equation}\label{eq:extension}
    \|\widehat{\mu}*f\|_{L^q(\rd)}\lesssim \|f\|_{L^{q'}(\rd)}.
\end{equation}

We suppose from now on that the support of $\mu$ is a Lebesgue null set in $\rd$ (and therefore we are genuinely attempting to restrict).  If  $q'=1$ then $f$ is integrable and  $\widehat{f}$ is continuous and bounded, and so estimates of the form \eqref{eq:restriction} will be possible for all $p'$. However, since the Fourier transform is an $L^2(\rd)$ isometry, if $ q'= 2$, $\widehat{f}\in L^2(\rd)$ and is only defined Lebesgue almost everywhere and so  \eqref{eq:restriction} will not hold for any $p'$. Thus, the question is only  interesting for the range $1< q'<2$.

There are many important open problems in restriction theory.  For example, for the surface measure on both the sphere and paraboloid, the conjecture is that \eqref{eq:restriction} holds if
\begin{equation} \label{sphereconjecture}
    \frac{d-1}{p'}\geq \frac{d+1}{q}\quad \text{and}\quad q>\frac{2d}{d-1},
\end{equation}
and is still open for $d\geq 3$. We refer the reader to \cite{Mat15}, and \cite{Dem20} for a more thorough presentation of its history. The interest on restriction estimates for (surface measures on) manifolds is partially motivated by its connection to PDEs (see \cite{OeS24} for other applications). For example, restriction estimates for the cone (see Section~\ref{sec:cone}) and the parabola, lead to $L^p$ estimates for the solutions of the wave and Schr\"odinger equations, respectively. It also has deep connections to harmonic analysis and geometric measure theory.  For example, the restriction conjecture for the sphere implies the Kakeya maximal function conjecture, which in turn implies the Kakeya set conjecture.

\subsection{Stein--Tomas restriction}

Stein observed that non-trivial estimates are often possible when $\mu$ is the surface measure on a smooth curved manifold, but that nothing can be said when the surface has flat pieces.  In general, such curvature features can be captured by Fourier decay and this led to, over many years, the Stein--Tomas restriction theorem.  The most general version of this is due to  Bak--Seeger \cite{BS11}, but builds on work of Stein (see \cite{Ste93}), Tomas \cite{Tom75}, Mochenhaupt \cite{Moc00}, Mitsis \cite{Mit02}, and others (see also \cite{Fef70,CS72}).  This theorem considers the  case $p=2$ and gives a non-trivial range of $q$ for which \eqref{eq:restriction} holds in terms of the Frostman exponent and Fourier decay of $\mu$.  In particular, if $\alpha>0$ and $\beta>0$ are such that
\begin{equation}\label{eq:FrostmanExp}
  \mu(B(x,r)) \lesssim r^\alpha
\end{equation}
for all $x \in \rd$ and $r>0$, and 
\begin{equation}\label{eq:FourierDecay}
  \big|\widehat{\mu}(\xi) \big|^2 \lesssim |\xi|^{-\beta} 
\end{equation}
for all $\xi \in \rd$, then 
% \eqref{eq:restriction} holds for $p=2$ and all
for all $f\in L^2(\mu)$, the estimate
\begin{equation}\label{eq:restrictionp2}
    \|\widehat{f\mu}\|_{L^q(\rd)}\lesssim \|f\|_{L^2(\mu)}
\end{equation}
holds for all
\begin{equation}\label{eq:SteinTomas}
q \geq 2+4 \frac{d-\alpha}{\beta}.
\end{equation}
This distinction between Fourier decay (as with curved manifolds)  and no Fourier decay (as with flat manifolds) is also fundamental in the fractal geometry literature.  Random sets often exhibit Fourier decay (see e.g.~\cite{Sal51}), whereas  sets with arithmetic structure often do not (see e.g.~\cite{LP22}). Thus, the question asked by Stein naturally extends  to fractals and suggests a large programme of research investigating restriction problems for fractal measures in various contexts.% and all the answers known to us are given in terms of the Fourier and Frostman dimensions of measures.

\subsection{New Stein--Tomas type results using the Fourier spectrum}

In this article we obtain a new range of  $q$ for which  \eqref{eq:restrictionp2} holds in terms of the  Fourier spectrum; a family of dimensions that continuously interpolate between the Fourier and Sobolev dimensions for measures. These dimensions extract more nuanced information about the measures than the Fourier and Sobolev dimensions alone, and this extra information can be put to use to study restriction problems. In Section~\ref{sec:main} we state and prove the main theorem (Theorem~\ref{thm:mainthm}), which follows by an interpolation argument between the $L^2\to L^2$ estimate given in \cite{Moc00} using the Frostman dimension and a new $L^{q'}\to L^q$ estimate obtained directly from the Fourier spectrum, which we state precisely in Corollary~\ref{cor:restrictionSpectrum}.

Theorem~\ref{thm:mainthm}  provides a continuum of Stein--Tomas type estimates, which are optimised to provide our main result.  However, it is worth noting that at one end of this continuum we recover the usual Stein--Tomas theorem and at the other end we obtain a new estimate which can be stated  in terms of the Fourier and \emph{Sobolev} dimensions which typically outperforms Stein--Tomas for multifractal measures, see Corollary~\ref{thm:restrictionSobolev}. We also provide results in the other direction by giving a range of $q$ in terms of the Fourier spectrum for which the $L^2\to L^q$ extension  estimate \eqref{eq:restrictionp2} fails, generalising an observation  of Hambrook and {\L}aba, see Theorem~\ref{converse}. In Section~\ref{sec:endpoint} we give a restriction estimate for Lorentz spaces using the Fourier spectrum, which allows us to prove the endpoint estimate, Theorem~\ref{thm:endpoint}, of Theorem~\ref{thm:mainthm}. We illustrate our results with several examples, including the surface measure on the cone and the moment curve (where we can beat the Stein--Tomas estimate) and several fractal measures; see  Sections~\ref{sec:cone}--\ref{sec:fractals}. 

In order to apply our results to the surface measure on the cone, we first explicitly derive the Fourier spectrum for this measure (see Proposition \ref{thm:coneSpectrum}); a result which  may be of independent  interest. We also give the Fourier spectrum of arclength measure on the moment curve (see Proposition \ref{prop:moment}), and this example exhibits multiple phase transitions which has  not been previously observed in `natural examples'.

\section{Preliminaries}

\subsection{Frostman, Hausdorff and Fourier dimensions}

We begin with a short summary of the different definitions of fractal dimensions that we will use. For more details we refer the reader to the book \cite{Mat15} and the article \cite{Fra24}. Unless stated otherwise, throughout the article we shall work with non-zero, finite, compactly supported, Borel measures on $\rd$.

Given $0<\alpha\leq d$, a measure $\mu$ satisfies the Frostman condition with exponent $\alpha$ if for all $r>0$ and $x\in\rd$, $\mu( B(x,r) )\lesssim r^\alpha$. If a measure $\mu$ satisfies this condition, its $s$-energy is finite for all $s<\alpha$, i.e.
\begin{equation*}
    I_{s}(\mu)\coloneqq \iint|x-y|^{-s}\,d\mu(x)\,d\mu(y)< \infty.
\end{equation*}
For an integrable function $f:\rd\to\C$, its Fourier transform is defined as
\begin{equation*}
    \widehat{f}(\xi) = \int_{\rd} e^{-2\pi i \xi\cdot x}f(x)\,dx,
\end{equation*}
and this operator can be easily extended to functions in $L^p$ for $p\in[1,2]$. The Fourier transform of a measure $\mu$ is
\begin{equation*}
    \widehat{\mu}(\xi) = \int_{\rd}e^{-2\pi i \xi\cdot x}\,d\mu(x).
\end{equation*}
Note that for $0<s< d$, it is a consequence of Parseval's theorem that
\begin{equation}\label{eq:senergy}
    I_{s}(\mu)\approx_{d,s} \int_{\rd}\big| \widehat{\mu}(\xi) \big|^{2} |\xi|^{s-d}\,d\xi.
\end{equation}

We define the Frostman and Sobolev dimensions of a measure as
\begin{equation*}
    \frd\mu = \sup\big\{ \alpha\in\R : \mu( B(x,r) )\lesssim r^\alpha,~~\forall r>0,x\in\rd \big\};
\end{equation*}
and
\begin{equation*}
    \sd\mu = \sup\{ s\in\R : \int_{\rd}\big| \widehat{\mu}(\xi) \big|^{2} |\xi|^{s-d}\,d\xi<\infty \},
\end{equation*}
respectively. Frostman's lemma implies that $\frd\mu\leq \sd\mu$ and when $\mu$ is supported on a null set $ \sd\mu \leq d$, in which case the Sobolev dimension is also called the $L^2$-dimension, the energy dimension, or the correlation dimension. Moreover,  the Hausdorff dimension of a Borel set $X\subseteq\rd$ is the supremum of $\min\{ d,\sd\mu \}$, taken over all measures on $X$.

The finiteness of $I_{s}(\mu)$ for $s \in (0,d)$ says that, on average, $\big| \widehat{\mu}(\xi) \big|$ decays like $|\xi|^{-\frac{s}{2}}$. The supremum of the polynomial rate of decay of $\big| \widehat{\mu}(\xi)\big|$ is its Fourier dimension,
\begin{equation*}
    \fd\mu = \sup\big\{ s\in\R : \sup_{\xi\in\rd}\big| \widehat{\mu}(\xi) \big|^2 |\xi|^{s}<\infty \big\}.
\end{equation*}
It is easy to see that $\fd\mu\leq\sd\mu$. What is more, in \cite[Section~3]{Mit02} the author shows that
\[
\min\Big\{ \frac{\fd\mu}{2}, d \Big\}\leq\frd\mu,
\]
 and this bound is sharp; see \cite{LL24+}. We say that a measure $\mu$ is Salem if $\fd\mu  = \sd\mu$. %For clarity and to avoid confusion, we will use $\alpha$ to refer to the Frostman dimension of the main measure we work with.

By taking appropriate limits, the Stein--Tomas restriction theorem \eqref{eq:SteinTomas} implies that  \eqref{eq:restrictionp2} holds for all
\[
q > 2+4 \frac{d-\frd\mu}{\fd\mu}.
\]
This version---stated in terms of dimensions---is useful to keep in mind when comparing with our main theorem.

\subsection{The Fourier spectrum}

The Fourier and Sobolev dimensions quantify the rate of decay of $\big| \widehat{\mu}(\xi) \big|$ in a weighted $L^\infty$ and $L^2$ sense, respectively. If instead we consider a weighted $L^p$ decay, we arrive to the definition of the Fourier spectrum; a family of dimensions first defined in \cite{Fra24} that interpolate between the Fourier and Sobolev dimensions.

For $\theta\in(0,1]$, define the $(s,\theta)$-energy of a measure $\mu$ as
\begin{equation*}
    \J_{s,\theta}(\mu)\coloneqq \bigg(\int_{\rd}\big| \widehat{\mu}(\xi) \big|^{\frac{2}{\theta}}|\xi|^{\frac{s}{\theta}-d}\,d\xi\bigg)^\theta
\end{equation*}
and for $\theta = 0$,
\begin{equation*}
    \J_{s,0}(\mu) = \sup_{\xi\in\rd}\big| \widehat{\mu}(\xi) \big|^{2}|\xi|^{s}.
\end{equation*}
Then the Fourier spectrum of $\mu$ at $\theta$ is
\begin{equation*}
    \fs\mu = \sup\{ s\in\rd : \J_{s,\theta}(\mu)<\infty \}.
\end{equation*}

It is immediate from its definition that $\fd^0\mu = \fd\mu$ and $\fd^1\mu = \sd\mu$. Furthermore, \cite[Theorem~1.1]{Fra24} states that $\fs\mu$ is non-decreasing, concave, and, for compactly supported measures, continuous for all $\theta\in[0,1]$.  In fact, it was proved in \cite[Proposition 4.2]{CFdO24} that for compactly supported $\mu$, $\fs \mu \leq \fd \mu + d \theta$ for  all $\theta \in [0,1]$.

The Fourier spectrum has already provided several applications where one uses the additional information provided by the spectrum of dimensions.  These applications include contributions to the Falconer distance problem \cite[Section 7]{Fra24}, sumset type problems \cite[Section 6]{Fra24}, and the dimension theory of orthogonal projections \cite{FdO24}.

\section{Main results} \label{sec:main}

\subsection{$L^2$ restriction estimates from the Fourier spectrum}

In the following theorem we give a new range of $q$ for the $L^{q'} \to L^2$ Fourier restriction   estimate to hold. The proof follows that of \cite{Moc00} where instead of using the Fourier dimension of $\mu$, we use its Fourier spectrum, which allows us to interpolate between the known $L^2\to L^2$ estimate given by the Frostman condition with a new $L^{q'}\to L^q$ estimate given by the Fourier spectrum, where we use the information given by the $(s,\theta)$-energies of a discretisation of $\mu$ at different scales.

\begin{thm}\label{thm:mainthm}
  Let $\mu$ be a non-zero, finite, compactly supported, Borel measure on $\rd$, with $\frd\mu=\alpha$ for some $0<\alpha< d$. If
  \begin{equation*}
    q>2 + 2\inf\limits_{\substack{\theta\in[0,1] \\ \fs\mu>d\theta}}\frac{(d-\alpha)(2-\theta)}{\fs\mu-\alpha \theta},
  \end{equation*}
  then for all $f\in L^2(\mu)$,
  \begin{equation*}
      \|\widehat{f\mu}\|_{L^q(\rd)} \lesssim \|f\|_{L^2(\mu)}. 
  \end{equation*}

  Equivalently, if 
  \begin{equation*}
    1 \leq q' < 1 + \sup_{\substack{\theta\in[0,1] \\ \fs\mu>d\theta}} \frac{\fs\mu-\alpha\theta}{\fs\mu + 4(d-\alpha) - \theta(2d-\alpha)},
  \end{equation*}
  then for all $f\in L^{q'}(\rd)$,
  \begin{equation*}
      \|\widehat{f}\,\|_{L^2(\mu)}\lesssim \|f\|_{L^{q'}(\rd)}.
  \end{equation*}
\end{thm}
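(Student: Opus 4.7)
The plan is to adapt the classical Stein--Tomas/Mockenhaupt strategy, replacing the pointwise Fourier decay input at one endpoint with an $L^{2/\theta}$ integrability bound extracted from the Fourier spectrum. By duality (and since $p=2$ is self-dual), the desired restriction estimate is equivalent to the convolution bound $\|\widehat{\mu}*f\|_{L^q(\rd)} \lesssim \|f\|_{L^{q'}(\rd)}$. I would decompose in frequency: pick a smooth partition of unity $\{\phi_j\}_{j\geq 0}$ with $\phi_j$ supported on the dyadic annulus $|\xi|\sim 2^j$, and set $\widehat{\mu}_j:=\phi_j\widehat{\mu}$, $T_j f:=\widehat{\mu}_j*f$, so that the convolution operator splits as $\sum_j T_j$.

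For each $T_j$ I would prove two complementary norm bounds. For the $L^2 \to L^2$ estimate, I view $T_j$ as a Fourier multiplier with symbol $\widehat{\widehat{\mu}_j}(\xi) = \int \widehat{\phi_j}(\xi+\eta)\,d\mu(\eta)$; since $\widehat{\phi_j}$ is an $L^1$-normalised bump at physical scale $2^{-j}$, the Frostman condition with exponent $\alpha$ gives
\[
\|T_j\|_{L^2\to L^2}\lesssim 2^{j(d-\alpha)+j\eps}
\]
for every $\eps>0$. For the second endpoint, fix $\theta\in(0,1]$ with $\fs\mu>d\theta$ and pick $s<\fs\mu$. Restricting the $(s,\theta)$-energy to the annulus $|\xi|\sim 2^j$ yields $\|\widehat{\mu}_j\|_{L^{2/\theta}}\lesssim 2^{j(d\theta-s)/2}$, and Young's convolution inequality with exponents $(2/\theta, q_\theta', q_\theta)$ for $q_\theta:=4/\theta$ then gives
\[
\|T_j f\|_{L^{q_\theta}}\lesssim 2^{j(d\theta-s)/2}\|f\|_{L^{q_\theta'}}.
\]
At $\theta=0$ this input degenerates to the usual $L^1\to L^\infty$ bound $\|T_j f\|_\infty \lesssim 2^{-j\fd\mu/2+j\eps}\|f\|_1$ from pointwise Fourier decay, recovering Mockenhaupt's original scheme.

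Next, Riesz--Thorin interpolation between these two endpoints with parameter $\lambda\in[0,1]$ produces an estimate for the pair $(q(\lambda)', q(\lambda))$ where $1/q(\lambda)=(1-\lambda)/2+\lambda\theta/4$, with
\[
\|T_j\|_{L^{q(\lambda)'}\to L^{q(\lambda)}}\lesssim 2^{j[(d-\alpha)(1-\lambda)+(d\theta-s)\lambda/2+\eps(1-\lambda)]}.
\]
Summability in $j$ of $\sum_j T_j$ requires the bracketed exponent to be strictly negative; eliminating $\lambda$ in favour of $q=q(\lambda)$, sending $s\to\fs\mu$, and performing the routine algebra translates this exactly into
\[
q>2+\frac{2(d-\alpha)(2-\theta)}{\fs\mu-\alpha\theta}.
\]
Once this holds the telescoping sum converges in operator norm, delivering the required extension bound. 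Taking the infimum over admissible $\theta\in[0,1]$ recovers the statement, with $\theta=0$ reproducing Stein--Tomas and $\theta=1$ producing the Sobolev-dimension variant advertised as Corollary~\ref{thm:restrictionSobolev}.

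The main technical obstacle is the clean extraction of the single-annulus bound $\|\widehat{\mu}_j\|_{L^{2/\theta}}\lesssim 2^{j(d\theta-s)/2}$ from the \emph{global} $(s,\theta)$-energy, together with careful bookkeeping of the smooth Littlewood--Paley cutoffs (so that in particular the $L^2 \to L^2$ step loses no more than $2^{j\eps}$ above $2^{j(d-\alpha)}$). Once these analytical inputs are in hand, the Riesz--Thorin step and the algebraic rearrangement of the summability condition into the stated form are essentially mechanical, though the $\theta=0$ endpoint must be treated separately since Young's inequality degenerates and one reverts to the direct $L^1\to L^\infty$ estimate there.
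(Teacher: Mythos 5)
Your proposal is correct and follows essentially the same route as the paper's proof: duality reduction to the convolution bound, dyadic Littlewood--Paley decomposition of $\widehat{\mu}$, the Frostman-based $L^2\to L^2$ bound $\lesssim 2^{j(d-\alpha_0)}$ on each piece, the annulus bound $\|\widehat{\mu}_j\|_{L^{2/\theta}}\lesssim 2^{-j(s-d\theta)/2}$ from the $(s,\theta)$-energy combined with Young's inequality, Riesz--Thorin interpolation, and summation in $j$ followed by the limits $s\to\fs\mu$, $\alpha_0\to\alpha$ and optimisation over $\theta$. The only cosmetic differences are that you sketch the multiplier argument for the $L^2$ endpoint where the paper cites Mockenhaupt, and you treat $\theta=0$ separately where the paper works with $\theta\in(0,1]$ and handles the low-frequency term $\Phi$ explicitly.
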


In fact, we can obtain the appropriate endpoint estimate. Although the following theorem implies Theorem~\ref{thm:mainthm}, its proof is  much more involved, since it depends on restriction estimates for Lorentz spaces. For this reason we postpone its proof to Section~\ref{sec:endpoint}, and give a direct proof of Theorem~\ref{thm:mainthm} at the end of this section.
\begin{thm}\label{thm:endpoint}
  Let $0<\alpha< d$, $\theta\in[0,1]$, $d\theta<s$, and let $\mu$ be a non-zero, finite, compactly supported, Borel measure on $\rd$, such that
  \begin{equation*}
       \mu\big( B(x,r) \big)\lesssim r^{\alpha},\quad \quad \forall x\in\rd,\,r>0;
  \end{equation*}
  and
  \begin{equation*}
      \J_{s,\theta}(\mu)< \infty.
  \end{equation*}
  Let
  \begin{equation*}
    q_{0} = 2 + 2\frac{(d-\alpha)(2-\theta)}{s-\alpha \theta}.
  \end{equation*}
  Then for all $f\in L^2(\mu)$,
  \begin{equation*}
      \|\widehat{f\mu}\|_{L^{q_{0}}(\rd)} \lesssim \|f\|_{L^2(\mu)}. 
  \end{equation*}

  Equivalently, let
  \begin{equation*}
    q_{0}' = 1 + \frac{s-\alpha\theta}{s + 4(d-\alpha) - \theta(2d-\alpha)}. %\frac{2(d-\alpha)(2-\theta) + 2(s-\alpha\theta)}{2(d-\alpha)(2-\theta) + s-\alpha\theta}.
  \end{equation*}
  Then for all $f\in L^{q_{0}'}(\rd)$,
  \begin{equation*}
    \|\widehat{f}\,\|_{L^2(\mu)} \lesssim  \|f\|_{L^{q_{0}'}(\rd)}.
  \end{equation*}
\end{thm}

The conditions $\fs\mu>d\theta$ and $s>d\theta$ are necessary when taking the infimum in Theorem~\ref{thm:mainthm} and in Theorem~\ref{thm:endpoint}. Indeed, the middle third Cantor measure has $\fs\mu \leq \theta$ for all $\theta$ (see \cite[Corollary~4.4]{Fra24}),  but if we attempt to apply Theorem~\ref{thm:mainthm} or Theorem~\ref{thm:endpoint} without the condition we would get a non-trivial restriction range because $\dim_\textup{F}^{1/2} \mu > \alpha/2$  (see \cite[Corollary~4.4]{Fra24}).  However, this is not possible by the following lemma, described to us by Chun-Kit Lai. In particular, the Fourier transform of the middle third Cantor measure does not decay to 0 at $\infty$.
\begin{lma}\label{lma:decay}
  Let $\mu$ be a non-zero, finite, compactly supported, Borel measure on $\rd$. If for some $q>1$, $\|\widehat{f\mu}\|_{L^q(\rd)}\leq \|f\|_{L^2(\mu)}$ for all $f\in L^2(\mu)$, then $\big| \widehat{\mu}(\xi) \big|\to 0$ as $|\xi|\to \infty$.
\end{lma}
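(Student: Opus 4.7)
The plan is to test the hypothesis on the constant function $f\equiv 1$ to conclude that $\widehat\mu$ itself lies in $L^q(\rd)$, and then to combine this integrability with the uniform continuity of $\widehat\mu$ to deduce decay at infinity. This is a very short argument with no real obstacle; the only point requiring care is that the statement should be read for $q \in (1,\infty)$, since for $q=\infty$ the estimate $\|\widehat{f\mu}\|_{L^\infty(\rd)}\le \|f\|_{L^2(\mu)}$ holds trivially for any finite measure and the conclusion can fail.

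For the first ingredient, taking $f\equiv 1 \in L^2(\mu)$ gives $\widehat{f\mu}=\widehat\mu$ and $\|f\|_{L^2(\mu)} = \mu(\rd)^{1/2}$, so $\|\widehat\mu\|_{L^q(\rd)} \le \mu(\rd)^{1/2} < \infty$. For the second ingredient, since $\mu$ is finite and compactly supported, say inside $B(0,R)$, the elementary bound $|e^{-2\pi i t}-1|\le 2\pi|t|$ yields
\[
|\widehat\mu(\xi) - \widehat\mu(\eta)| \le \int |e^{-2\pi i(\xi-\eta)\cdot x}-1|\,d\mu(x) \le 2\pi R\,\mu(\rd)\,|\xi-\eta|,
\]
so $\widehat\mu$ is Lipschitz, and in particular uniformly continuous, on $\rd$.

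It then remains to observe that any uniformly continuous $g\in L^q(\rd)$ with $q<\infty$ must satisfy $g(\xi)\to 0$ as $|\xi|\to\infty$: if $|g(\xi_n)|\ge \eps$ along a sequence with $|\xi_n|\to\infty$, uniform continuity produces a single $\delta>0$ with $|g|\ge \eps/2$ on each ball $B(\xi_n,\delta)$, and passing to a subsequence making these balls pairwise disjoint gives $\int_{\rd}|g|^q \ge \sum_n (\eps/2)^q\,\mathrm{vol}(B(\xi_n,\delta)) = \infty$, contradicting $g\in L^q$. Applying this to $g=\widehat\mu$ finishes the proof.
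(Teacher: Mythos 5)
Your proof is correct and follows essentially the same route as the paper's: test the estimate on $f\equiv 1$ to get $\widehat{\mu}\in L^q(\rd)$, then combine this with the uniform (Lipschitz) continuity of $\widehat{\mu}$ to force decay at infinity. You merely spell out the standard details the paper cites, and your remark that the statement should be read with $q<\infty$ is a fair clarification consistent with the paper's intended use.
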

\begin{proof}
  Taking $f=1$, and using that  $\mu$ has compact support,  $f\in L^2(\mu)$ and so $\widehat{\mu}\in L^q(\rd)$.  However, since  $\widehat{\mu}$ is uniformly continuous (even uniformly Lipschitz, see \cite[(3.19)]{Mat15}) this implies that $\widehat{\mu}$ must decay at infinity.
\end{proof}

In the proof of Theorem~\ref{thm:mainthm} (see Section~\ref{sec:proofMainThm}) we obtained the following restriction estimate directly from the Fourier spectrum, without appealing to the $L^2\to L^2$ estimate coming from the Frostman exponent.  We record it here for interest. 
\begin{cor}\label{cor:restrictionSpectrum}
  Let $\mu$ be a non-zero, finite, compactly supported, Borel measure on $\rd$. If
  \begin{equation*}
    q>\inf \Big\{ \frac{4}{\theta} : \theta\in[0,1],\, \fs\mu >d\theta \Big\},
  \end{equation*}
  then for all $f\in L^2(\mu)$,
  \begin{equation*}
      \|\widehat{f\mu}\|_{L^q(\rd)} \lesssim \|f\|_{L^2(\mu)}. 
  \end{equation*}
  
  Equivalently, if 
  \begin{equation*}
    1 \leq q' < \sup\Big\{ \frac{4}{4-\theta} : \theta\in[0,1],\, \fs\mu >d\theta \Big\},
  \end{equation*}
  then for all $f\in L^{q'}(\rd)$,
  \begin{equation*}
      \|\widehat{f}\|_{L^2(\mu)}\lesssim \|f\|_{L^{q'}(\rd)}.
  \end{equation*}
\end{cor}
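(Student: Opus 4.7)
The plan is to extract the required estimate directly from the dyadic argument in the proof of Theorem~\ref{thm:mainthm}, bypassing the Riesz--Thorin interpolation with the Frostman $L^2 \to L^2$ bound. As noted in Section~\ref{sec:restrictionIntro}, it suffices to prove the convolution inequality $\|\widehat{\mu}*f\|_{L^q(\rd)} \lesssim \|f\|_{L^{q'}(\rd)}$, which is equivalent to both forms in the statement. Moreover, since $\mu$ is compactly supported, an estimate at a single exponent $q$ upgrades automatically to every larger exponent, so it suffices to work at the single exponent $q = 4/\theta$ for each admissible $\theta$.

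Fix $\theta \in (0,1]$ with $\fs\mu > d\theta$ and choose $s$ with $d\theta < s < \fs\mu$; the strict inequality $s > d\theta$ is exactly what will make a geometric series converge below. Perform the same dyadic decomposition $\widehat{\mu} = \Phi + \sum_{j \ge 0} \widehat{\mu}_j$ used in the proof of Theorem~\ref{thm:mainthm}. The low-frequency piece $\Phi$ is bounded and compactly supported, so $\|\Phi * f\|_{L^q(\rd)} \lesssim \|f\|_{L^{q'}(\rd)}$ follows at once from Young's convolution inequality, just as before. For the dyadic pieces the identical calculation (relying only on $\J_{s,\theta}(\mu) < \infty$) yields
\[
\|\widehat{\mu}_j\|_{L^{2/\theta}(\rd)} \lesssim 2^{-j(s - d\theta)/2},
\]
and another application of Young's inequality produces
\[
\|\widehat{\mu}_j * f\|_{L^{4/\theta}(\rd)} \lesssim 2^{-j(s - d\theta)/2} \|f\|_{L^{4/(4-\theta)}(\rd)},
\]
where the exponents $4/\theta$ and $4/(4-\theta)$ are indeed conjugate. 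Since $s > d\theta$ the geometric series in $j$ converges, and summing gives the $L^{4/(4-\theta)} \to L^{4/\theta}$ convolution bound.

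Assembling the two pieces yields the convolution estimate at $q = 4/\theta$ for every admissible $\theta$, and hence for all $q > 4/\theta$ by the compact-support upgrade. Taking the infimum over admissible $\theta$ produces the stated range; the case $\theta = 0$ is irrelevant since $4/0 = \infty$ never realises the infimum. No serious obstacle arises, since this is precisely the $\lambda = 0$ specialisation of the Riesz--Thorin step inside the proof of Theorem~\ref{thm:mainthm}: all the hard work, namely the dyadic Fourier spectrum estimate on $\widehat{\mu}_j$, is already present there and only needs to be repackaged without interpolating against the Frostman bound.
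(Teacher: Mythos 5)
Your proposal is correct and is essentially the paper's own route: the corollary is exactly the $\lambda=0$ specialisation of the dyadic argument inside the proof of Theorem~\ref{thm:mainthm}, namely summing the bound $\|\widehat{\mu}_j * f\|_{L^{4/\theta}(\rd)} \lesssim 2^{-j(s-d\theta)/2}\|f\|_{L^{4/(4-\theta)}(\rd)}$ over $j$ together with the Young's inequality treatment of $\Phi$, then upgrading to larger $q$ via compact support. No gaps; this matches the paper's intended derivation.
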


As a simple consequence of Theorem~\ref{thm:endpoint} and real interpolation with the trivial $L^1\to L^\infty$ estimates
\begin{equation*}
    \|\widehat{f\mu}\|_{L^{\infty}(\rd)}\lesssim \|f\|_{L^{1}(\mu)}\quad \text{and}\quad \|\widehat{f}\,\|_{L^{\infty}(\mu)}\lesssim \|f\|_{L^1(\rd)},
\end{equation*}
we obtain the following corollary.
\begin{cor}
  Let $0<\alpha< d$, $\theta\in[0,1]$, $d\theta<s$, and let $\mu$ be a non-zero, finite, compactly supported, Borel measure on $\rd$, such that
  \begin{equation*}
       \mu\big( B(x,r) \big)\lesssim r^{\alpha},\quad \quad \forall x\in\rd,\,r>0;
  \end{equation*}
  and
  \begin{equation*}
      \J_{s,\theta}(\mu)< \infty.
  \end{equation*}
  For $q\geq q_{0}$, where $q_{0}$ is as defined in Theorem~\ref{thm:endpoint}, the estimate
  \begin{equation*}
      \|\widehat{f\mu}\|_{L^q(\rd)}\lesssim \|f\|_{L^{2q/(2q-q_{0})}(\mu)}
  \end{equation*}
  holds for all $f\in L^{\frac{2q}{2q-q_{0}}}(\mu)$.

  Equivalently, for $1\leq q' \leq q_{0}'$,
  \begin{equation*}
    \|\widehat{f}\,\|_{L^{2q/q_{0}}(\mu)} \lesssim  \|f\|_{L^{q'}(\rd)}
   \end{equation*}
   holds for all $f\in L^{q'}(\rd)$.
\end{cor}

$L^2$ restriction theorems of the type proved in this section have numerous applications.  For example, in \cite[Theorem 5.1]{Moc00} Mockenhaupt uses the Stein--Tomas theorem to prove that certain convolution operators are $L^p$-multipliers, with the range of $p$ depending on the established range for $L^2$ restriction.  One can do the same using our Theorem \ref{thm:mainthm} and obtain improved ranges, but we leave the details to the reader.

\subsubsection{Proof of Theorem~\ref{thm:mainthm}} \label{sec:proofMainThm}
  We will show that $ \|\widehat{\mu}*f\|_{L^{q}(\rd)}\lesssim \| f\|_{L^{q'}(\rd)}$ for every $f\in L^{q'}(\rd)$ for all $q$ sufficiently close to the desired threshold.  This is equivalent to both statements in the theorem (see Section~\ref{sec:restrictionIntro}). In particular, establishing \eqref{eq:restrictionp2} for some $q$ establishes it for all larger $q$ since $\mu$ is compactly supported.

To begin with, let $q \geq 1$ be arbitrary.  We start by breaking up $\widehat{\mu}$ dyadically in the standard way. Let $\rho\in\mathcal{C}^\infty(\rd)$ be such that $\rho(\xi)=0$ for $|\xi|\leq 1/2$ and $\rho(\xi) = 1$ for $|\xi|\geq1$. Define the function $\varphi(\xi) = \rho(2\xi) - \rho(\xi)$, noting that  $\spt\varphi\subset\{ \xi\in\rd : \frac{1}{4}\leq|\xi|\leq1 \}$ and
\[
\sum_{j=0}^\infty \varphi(2^{-j}\xi)  = 1 \qquad \text{(for $|\xi| \geq 1$).}
\]
  Clearly $\widehat{\mu}= \Phi+\sum_{j=0}^\infty \widehat{\mu}_{j}$ where
\[
\Phi(\xi) =  \Big( 1 - \sum_{j=0}^\infty \varphi(2^{-j}\xi) \Big)\widehat{\mu}(\xi) 
\]
and  $\widehat{\mu}_{j}(\xi) =\varphi(2^{-j}\xi)\widehat{\mu}(\xi)$.  Therefore,
  \begin{equation}\label{eq:boundmuq}
      \|\widehat{\mu}*f\|_{L^q(\rd)}\leq \| \Phi* f  \|_{L^q(\rd)} +  \sum_{j=0}^\infty \| \widehat{\mu}_{j}* f \|_{L^q(\rd)}.
  \end{equation}
  For the first term, since $\Phi$ is bounded and has compact support,  
\[
\| \Phi\|_{L^{q/2}(\rd)} \lesssim \| \Phi \|_{L^\infty(\rd)}\lesssim 1.
\]
Using this and Young's inequality for convolutions gives,
  \begin{equation*}
   \| \Phi* f  \|_{L^q(\rd)} \leq   \| \Phi   \|_{L^{q/2}(\rd)} \| f \|_{L^{q'}(\rd)} \lesssim \|f\|_{L^{q'}(\rd)}.
  \end{equation*}
  For the second term we will obtain the $L^{q'}\to L^q$ bound by interpolation. Let $\theta\in (0,1]$ and $s >0$  be such that $\fs\mu >s> d\theta$.  Since $\J_{s,\theta}(\mu)\lesssim1$,
  \begin{equation*}
    \|\widehat{\mu}_{j}\|_{L^{2/\theta}(\rd)} \approx \bigg( \int_{|\xi|\approx 2^{j}}\big| \widehat{\mu}(\xi) \big|^{\frac{2}{\theta}} |\xi|^{\frac{s}{\theta}-d}\,2^{-j \big( \frac{s}{\theta}-d \big)}\,d\xi\bigg)^\frac{\theta}{2} \lesssim 2^{-j \frac{s-d\theta}{2} }\J_{s,\theta}(\mu)^{1/2} \lesssim 2^{-j \frac{s-d\theta}{2} },
  \end{equation*}
  and so by Young's inequality for convolutions,
  \begin{equation}\label{eq:boundmuj}
      \|\widehat{\mu}_{j}*f\|_{L^{4/\theta}(\rd)}\leq \|\widehat{\mu}_{j}\|_{L^{2/\theta}(\rd)}\|f\|_{L^{4/(4-\theta)}(\rd)}\lesssim 2^{-j \frac{s-d\theta}{2}}\|f\|_{L^{4/(4-\theta)}(\rd)}.
  \end{equation}
  Following \cite[Theorem~4.1]{Moc00} we can use the fact that $\mu$ is $\alpha_0$-Frostman for  $\alpha_0<\alpha$ to prove that
  \begin{equation}\label{eq:boundL2}
      \|\widehat{\mu}_{j}*f\|_{L^2(\rd)}\lesssim 2^{-j(\alpha_0-d)}\|f\|_{L^2(\rd)}.
  \end{equation}
  Let $\lambda \in [0,1]$ and apply the Riesz--Thorin interpolation theorem with parameter $\lambda$ between \eqref{eq:boundmuj} and \eqref{eq:boundL2}.  This gives a $L^{q'}\to L^q$ estimate, with 
  \begin{equation*}
\frac{1}{q} = \frac{(1-\lambda)\theta}{4} + \frac{\lambda}{2},
  \end{equation*}
noting that this forces $2 \leq q \leq 4/\theta$. More precisely, we obtain
  \begin{equation*}
      \|\widehat{\mu}_{j}*f\|_{L^{q}(\rd)}\leq 2^{-j\big( (1-\lambda)\frac{s-d\theta}{2}+\lambda(\alpha_0 - d) \big)}\|f\|_{L^{q'}(\rd)}.
  \end{equation*}
Therefore, provided 
  \begin{equation} \label{eq:cond}
(1-\lambda)\frac{s-d\theta}{2}+\lambda(\alpha_0 - d)>0,
  \end{equation}
we get
  \begin{equation*}
      \sum_{j=0}^\infty \|\widehat{\mu}_{j}*f\|_{L^q(\rd)}\lesssim \|f\|_{L^{q'}(\rd)}.
  \end{equation*}
  Using this in \eqref{eq:boundmuq} establishes the desired  $L^{q'}\to L^q$  bound whenever $q \in [2,4/\theta]$ satisfies  \eqref{eq:cond}, that is, whenever
  \begin{equation*}
    \frac{4}{\theta} \geq q>2 + 2 \frac{(d-\alpha_0)(2-\theta)}{s-\alpha_0 \theta}.
  \end{equation*}
Crucially, using that $s>d \theta$ and $\alpha_0<d$, 
  \begin{equation*}
    \frac{4}{\theta}  >2 + 2 \frac{(d-\alpha_0)(2-\theta)}{s-\alpha_0\theta}
  \end{equation*}
and so there is a non-empty range of possible $q$.    This proves the desired result upon letting $\alpha_0 \to \alpha$ and $s \to \fs \mu$.

\subsection{A partial converse from the Fourier spectrum}

Another interesting question is to determine when the restriction estimate \eqref{eq:restriction} does not hold.  In \cite{HL13} the authors noted that \eqref{eq:restriction} will not hold for any $2 \leq q<\frac{2d}{\sd\mu}$, which is a non-trivial statement when $\sd \mu <d$.  We repeat their simple argument here. Let  $d>s>\sd\mu$, in which case $I_{s}(\mu)=\infty$. Then, by H\"older's inequality with conjugate exponents $\frac{q}{2}$ and $\frac{q}{q-2}$,
\begin{equation*}
    \infty =\int_{|\xi| \geq 1} |\widehat \mu(\xi)|^2 |\xi |^{s-d} d \xi  \leq \|\widehat{\mu}\|_{q}^2 \bigg( \int_{|\xi| \geq 1} |\xi|^{\frac{q(s-d)}{q-2}} \,d\xi \bigg)^{\frac{q-2}{q}},
\end{equation*}
and the last integral is finite if $q<\frac{2d}{s}$.  Therefore, for $q<\frac{2d}{\sd\mu}$, $\widehat \mu$ cannot be in $L^q$ and so \eqref{eq:restriction} does not hold for any $p$.  We can generalise this result (which we refer to later as the Hambrook--{\L}aba lower bound) using the Fourier spectrum.

\begin{thm} \label{converse}
  Let $\mu$ be a non-zero, finite, compactly supported, Borel measure on $\rd$. If $q=\frac{2}{\theta}$ for some $\theta \in (0,1]$ such that $ \mathcal{J}_{d\theta, \theta}(\mu)=\infty$ and $p \in [1,\infty]$, then  for $f =1$, 
  \begin{equation*}
      \|\widehat{f\mu}\|_{L^q(\rd)} =\infty > \|f\|_{L^p(\mu)} = \mu(\mathbb{R}^d)^{1/p}.
  \end{equation*}
Hence, \eqref{eq:restriction} does not hold for any $p \in [1,\infty]$.

In particular, \eqref{eq:restriction} does not hold for any $p\in[1,\infty]$ if
  \begin{equation*}
      q< \sup\Big\{ \frac{2}{\theta} : \fs\mu < d\theta \Big\}.
  \end{equation*}
\end{thm}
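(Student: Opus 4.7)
The plan is to generalise the Hambrook--{\L}aba argument by exhibiting $f \equiv 1$ as an explicit obstruction, exploiting a clean identity between $\J_{d\theta,\theta}(\mu)$ and a suitable $L^q$ norm of $\widehat{\mu}$. When $s = d\theta$ the weight $|\xi|^{s/\theta - d}$ in the definition of $\J_{s,\theta}(\mu)$ collapses to $1$, giving
\[
\J_{d\theta,\theta}(\mu) = \bigg(\int_{\rd} |\widehat{\mu}(\xi)|^{2/\theta}\,d\xi\bigg)^{\theta} = \|\widehat{\mu}\|_{L^{2/\theta}(\rd)}^{2}.
\]
With $q = 2/\theta$, the hypothesis $\J_{d\theta,\theta}(\mu) = \infty$ is therefore exactly $\|\widehat{\mu}\|_{L^q(\rd)} = \infty$.

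For the main statement I would simply test \eqref{eq:restriction} on $f \equiv 1$. Since $\mu$ is finite, $\|f\|_{L^p(\mu)} = \mu(\rd)^{1/p} < \infty$ for every $p \in [1,\infty]$, whereas $\widehat{f\mu} = \widehat{\mu}$ and the identity above forces $\|\widehat{f\mu}\|_{L^q(\rd)} = \infty$. Hence \eqref{eq:restriction} cannot hold with any finite constant, for any choice of $p$.

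The ``in particular'' clause then follows from two small observations. First, $\fs\mu < d\theta$ forces $\J_{d\theta,\theta}(\mu) = \infty$ directly from the supremum definition $\fs\mu = \sup\{s : \J_{s,\theta}(\mu) < \infty\}$, so the first part applies at $q = 2/\theta$. Second, failure of \eqref{eq:restriction} for $f \equiv 1$ at some $q_1$ propagates to all $q \leq q_1$: because $\widehat{\mu}$ is bounded (as $\mu$ is finite), one has $|\widehat{\mu}|^{q_1} \leq \|\widehat{\mu}\|_\infty^{q_1 - q} |\widehat{\mu}|^{q}$, so $\widehat{\mu} \notin L^{q_1}(\rd)$ implies $\widehat{\mu} \notin L^{q}(\rd)$. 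Combining the two, for any $q$ strictly less than $\sup\{2/\theta : \fs\mu < d\theta\}$ I pick $\theta \in (0,1]$ with $\fs\mu < d\theta$ and $2/\theta \geq q$, and invoke the first part.

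No step poses a serious obstacle; the argument is essentially bookkeeping around the identity $\J_{d\theta,\theta}(\mu) = \|\widehat{\mu}\|_{L^{2/\theta}(\rd)}^{2}$, which is the precise reason the original Hambrook--{\L}aba computation (the case $\theta = 1$) extends to the whole Fourier spectrum.
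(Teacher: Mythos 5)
Your proof is correct and follows essentially the same route as the paper: test the extension inequality on $f\equiv 1$ and observe that the weight in $\J_{s,\theta}(\mu)$ collapses at $s=d\theta$, giving $\J_{d\theta,\theta}(\mu)=\|\widehat{\mu}\|_{L^{2/\theta}(\rd)}^{2}$. Your explicit downward propagation in $q$ (using boundedness of $\widehat{\mu}$) is a point the paper leaves implicit in the ``in particular'' clause, and it is handled correctly.
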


\begin{proof}
Let  $f=1$ and  $\theta \in (0,1]$ be such that $ \mathcal{J}_{d\theta, \theta}(\mu)=\infty$. Then
\[
\infty = \mathcal{J}_{d\theta, \theta}(\mu) = \bigg(\int |\widehat{\mu}(\xi) |^{\frac{2}{\theta}}  \, d\xi \bigg)^\theta =      \|\widehat{f\mu}\|_{L^{2 /\theta}{(\rd)}}^2,
\]
as required.
\end{proof}

Note that for all $\theta\in(0,1]$ such that $\fs\mu < d\theta$, $\mathcal{J}_{d\theta, \theta}(\mu)=\infty$. If $\sd \mu = \dim_\textup{F}^1 \mu <d$, then we recover the fact that \eqref{eq:restriction} fails for $q<\frac{2d}{\sd\mu}$. However, we will often get a stronger result (see Figure~\ref{fig:restrictionCone}).  Let us consider the smallest possible range provided by the previous theorem.  This will happen when the Fourier spectrum stays above $d\theta$ for as large $\theta$ as possible. Since the Fourier spectrum is increasing in $\theta$, this will happen when $\fs \mu =\sd \mu$ for $\theta \geq \sd \mu /d$. For non-Salem measures $\mu$ this is quite unlikely to happen, and so we will obtain a strictly better (negative) range.  On the other hand, the best information we can get from the above result is when the Fourier spectrum is as small as possible.  Since the Fourier spectrum is concave this will happen when $\fs \mu$ is affine and given by
\[
\fs \mu = \fd \mu + \theta(\sd \mu - \fd \mu).
\]
In this case we get that \eqref{eq:restriction} fails for 
\[
q < 2+ \frac{2(d-\sd \mu)}{\fd \mu}.
\]
Theorem \ref{converse} seems to provide useful new information concerning the general restriction problem.  For example, for the sphere, it gives that  \eqref{eq:restriction}  fails for
\[
q<\frac{2d}{d-1}
\]
and this is one of the sharp thresholds from the restriction conjecture. This threshold is also obtained by the Hambrook--{\L}aba bound.  However, we will see this behaviour again later when we consider the cone and the moment curve and,  in fact, Theorem \ref{converse} gives a sharp threshold for the general restriction problem for these examples whereas  the Hambrook--{\L}aba bound does not (see Sections~\ref{sec:cone} and \ref{sec:moment}). 

Another simple consequence of the above is the following mild strengthening of Lemma \ref{lma:decay}.  

\begin{cor}\label{lma:decay2}
  Let $\mu$ be a non-zero, finite, compactly supported, Borel measure on $\rd$. If  $ \mathcal{J}_{d\theta, \theta}(\mu)=\infty$ for all $\theta \in (0,1]$, then  \eqref{eq:restriction} does not hold for any pairs $p,q \in [1,\infty]$ with $q<\infty$. In particular, this is true  if $\fs \mu < d \theta$ for all $\theta \in (0,1]$ or if  $\widehat{\mu}$ does not decay at infinity.
\end{cor}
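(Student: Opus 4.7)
The plan is to take $f\equiv 1$ in the dual form $\|\widehat{f\mu}\|_{L^q(\rd)}\lesssim \|f\|_{L^p(\mu)}$ of \eqref{eq:restriction} and show that the resulting inequality $\|\widehat{\mu}\|_{L^q(\rd)}\lesssim \mu(\rd)^{1/p}$ fails for every $(p,q)\in [1,\infty]\times [1,\infty)$. Since the exponent $s/\theta-d$ vanishes when $s=d\theta$, one has the identity $\mathcal{J}_{d\theta,\theta}(\mu)=\|\widehat{\mu}\|_{L^{2/\theta}(\rd)}^{2}$. The hypothesis therefore says $\widehat{\mu}\notin L^{2/\theta}(\rd)$ for every $\theta\in (0,1]$, i.e.\ $\widehat{\mu}\notin L^{q}(\rd)$ for every $q\in [2,\infty)$, which is also what one obtains by applying Theorem~\ref{converse} for each such $\theta$.

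For the remaining range $q\in [1,2)$, I would use that $\widehat{\mu}$ is bounded ($\|\widehat{\mu}\|_{\infty}\leq \mu(\rd)<\infty$) together with the elementary interpolation
\[
\|\widehat{\mu}\|_{L^{2}(\rd)}^{2}=\int |\widehat{\mu}|^{2-q}|\widehat{\mu}|^{q}\,d\xi \leq \|\widehat{\mu}\|_{L^{\infty}(\rd)}^{2-q}\,\|\widehat{\mu}\|_{L^{q}(\rd)}^{q}.
\]
The $\theta=1$ case of the hypothesis makes the left-hand side infinite, forcing $\|\widehat{\mu}\|_{L^q(\rd)}=\infty$ for every $q\in [1,2)$ too. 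Taking $f\equiv 1$ then contradicts the dual inequality for every $p\in[1,\infty]$ and every $q\in [1,\infty)$.

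Finally, for the two sufficient conditions: since the weight $|\xi|^{s/\theta-d}$ is increasing in $s$ on the tail $|\xi|\geq 1$, the set $\{s:\mathcal{J}_{s,\theta}(\mu)<\infty\}$ is a left half-line, so $\fs\mu<d\theta$ is equivalent to $\mathcal{J}_{d\theta,\theta}(\mu)=\infty$, as already noted after Theorem~\ref{converse}. If instead $\widehat{\mu}$ does not decay at infinity, pick $c>0$ and a sequence $\xi_n\to\infty$ with $|\widehat{\mu}(\xi_n)|\geq c$; uniform continuity of $\widehat{\mu}$ yields $|\widehat{\mu}|\geq c/2$ on balls $B(\xi_n,r_0)$ for some fixed $r_0>0$, and passing to a subsequence so these balls are disjoint gives $\int |\widehat{\mu}|^{2/\theta}\,d\xi=\infty$ for every $\theta\in(0,1]$, which is the required hypothesis. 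The whole argument is essentially a direct unpacking of earlier results; the one slightly non-trivial step is the short interpolation extending failure from $q=2$ to $q\in[1,2)$.
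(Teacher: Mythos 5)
Your proof is correct and follows essentially the same route the paper intends: take $f\equiv 1$, use the identity $\mathcal{J}_{d\theta,\theta}(\mu)=\|\widehat{\mu}\|_{L^{2/\theta}(\rd)}^{2}$ (i.e.\ Theorem~\ref{converse}) to rule out $q\in[2,\infty)$, push down to $q\in[1,2)$ via boundedness of $\widehat{\mu}$, and handle the two ``in particular'' clauses from the definition of $\fs$ and from uniform continuity of $\widehat{\mu}$. The only nitpick is that $\fs\mu<d\theta$ and $\mathcal{J}_{d\theta,\theta}(\mu)=\infty$ are not quite equivalent (the energy can be infinite at the supremum itself), but only the implication you actually use---which is immediate from the definition of the supremum---is needed.
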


\section{Restriction estimates for Lorentz spaces and the endpoint}\label{sec:endpoint}

\subsection{Lorentz spaces and interpolation}

Lorentz spaces arise in the interpolation between $L^1$ and $L^\infty$, and provide a more nuanced description of the behaviour of functions and their distributions. For this reason, they are useful to obtain sharper inequalities involving $L^p$ norms, leading to improvements on classical results such as the Hausdorff--Young inequality, and the Sobolev embedding theorem, among others (see \cite[Chapter~VI \S7.10]{Ste93}). In particular, they have been useful to obtain improved $L^{q'}\to L^{p'}$ restriction estimates (see e.g. \cite{BOS09, BS11,GXZ24, KT98}).

In this section we introduce restriction estimates between Lorentz spaces as a means to prove Theorem~\ref{thm:endpoint}. For more details on Lorentz spaces we refer the reader to \cite{Gra14,SW71}.

Recall that for operators $T$ acting on Lorentz spaces, by density of simple functions and the fact that $L^{q,\infty}$ is a normed space, being of restricted weak type $(p,q)$ is equivalent to the fact that $T$ maps $L^{p,1}(\rd)$ to $L^{q,\infty}(\rd)$ boundedly.

% \begin{lma}[{Real interpolation method \cite[Theorem~1.4.19]{Gra14}}]\label{lma:realInterp}
%   Let $T$ be a linear operator of restricted weak type $(p_{i},q_{i})$, for $i=0,1$. Let $\lambda\in(0,1)$, and define
%   \begin{equation*}
%     \frac{1}{p_{\lambda}} = \frac{\lambda}{p_{0}} + \frac{(1-\lambda)}{p_{1}}\quad \text{and}\quad \frac{1}{q_{\lambda}} = \frac{\lambda}{q_{0}} + \frac{(1-\lambda)}{q_{1}}.
%   \end{equation*}
%   Then for all $1\leq r\leq \infty$,
%   \begin{equation*}
%     \|Tf\|_{L^{q_{\lambda},r}}\lesssim\|f\|_{L^{p_{\lambda},r}},
% \end{equation*}
% \end{lma}

 Given two vector spaces compatible in the sense of interpolation theory (see \cite[Chapter~V]{SW71}), and $\lambda\in[0,1]$, we write $B_{\lambda,\infty}(B_{0},B_{1})$ to denote the real interpolation space between $B_{0}$ and $B_{1}$. We shall only make use of this when $B_{0} = L^p(\rd)$ and $B_{1} = L^q(\rd)$, for some $1\leq p,q\leq \infty$, in which case $B_{\lambda,\infty}(B_{0},B_{1}) = L^{r,\infty}(\rd)$ for $\frac{1}{r} = \frac{1-\lambda}{p} + \frac{\lambda}{q}$. The following lemma is an interpolation theorem stated implicitly by Bourgain in \cite{Bou85}, which can also be found in \cite[Section~6.2]{CSWW99} in a more general setting. We state it here for convenience, in the form that we will use.

\begin{lma}[Bourgain's interpolation trick]\label{lma:Bourgainstrick}
  Let $A_{0}, A_{1}$, and $B_{0},B_{1}$ be two pairs of quasinormed spaces compatible in the sense of interpolation theory.
  If $T_{j}$ is a sequence of operators that satisfy that there exist constants $\alpha_{0}<0<\alpha_{1}$, and $M_{0},M_{1}>0$ such that for all $j\in\N$, $i=0,1$, and $f\in A_{0}\cap A_{1}$,
  \begin{equation*}
      \|T_{j}f\|_{B_{i}}\leq M_{i}2^{j\alpha_{i}}\|f\|_{A_{i}}.
  \end{equation*}
  Then for $\lambda = \frac{\alpha_{0}}{\alpha_{0} - \alpha_{1}}\in(0,1)$,
  % $\frac{1}{p} = \frac{1-\lambda}{p_{0}} + \frac{\lambda}{p_{1}}$, \ana{$\frac{1}{q} = \frac{1-\lambda}{q_{0}} + \frac{\lambda}{q_{1}}$},
   and all $f\in A_{0}\cap A_{1}$, the operator $T = \sum_{j=1}^\infty T_{j}$ satisfies
  \begin{equation*}
      \|Tf\|_{B_{\lambda,\infty}(B_{0},B_{1})}\lesssim_{\alpha_{0},\alpha_{1}} M_{0}^{1-\lambda}M_{1}^{\lambda}\|f\|_{A_{0}}^{1-\lambda}\|f\|_{A_{1}}^\lambda.
  \end{equation*}
\end{lma}

\subsection{Restriction estimates for Lorentz spaces}

When considering restriction estimates in Lorentz spaces, the question becomes for which $p,q,r,s\in[1,\infty]$ does it hold that
\begin{equation}\label{eq:restrictionLorentz}
    \|\widehat{f}\,\|_{L^{r',s'}(\mu)}\lesssim \|f\|_{L^{p',q'}(\rd)},
\end{equation}
for all $f\in L^{p',q'}(\rd)$. Similarly to the $L^p$ case (see Section~\ref{sec:restrictionIntro}), by the dual characterisation of Lorentz quasinorms, the $L^{p',q'}\to L^{r',s'}$ restriction estimate \eqref{eq:restrictionLorentz} is equivalent to the $L^{r,s}\to L^{p,q}$ extension estimate
\begin{equation}\label{eq:extensionLorentz}
    \|\widehat{f\mu}\|_{L^{p,q}(\rd)}\lesssim \|f\|_{L^{r,s}(\mu)}.
\end{equation}
What is more, in the case $r=s=2$, both \eqref{eq:restrictionLorentz} and \eqref{eq:extensionLorentz} are equivalent to
\begin{equation*}
    \|\widehat{\mu}*f\|_{L^{p,q}(\rd)}\lesssim \|f\|_{L^{p',q'}(\rd)}.
\end{equation*}

Theorem~\ref{thm:endpoint} is a consequence of the following restriction estimate for Lorentz spaces, which is also of independent interest.

\begin{prop}\label{prop:restrictionLorentz}
  Let $0<\alpha< d$, $\theta\in[0,1]$, $d\theta<s$, and let $\mu$ be a non-zero, finite, compactly supported, Borel measure on $\rd$, such that
  \begin{equation*}
       \mu\big( B(x,r) \big)\lesssim r^{\alpha},\quad \quad \forall x\in\rd,\,r>0;
  \end{equation*}
  and
  \begin{equation*}
      \J_{s,\theta}(\mu)< \infty.
  \end{equation*}
  Let
  \begin{equation*}
      \rho = \frac{4( 2\alpha - s + d(\theta-2) )(\alpha-s+d(\theta-1))}{2s^2 - 2\alpha^2(\theta-4)-ds(7\theta -12)-d^2(14\theta -5\theta^2-8) - \alpha(\theta-4)(-3s + d(3\theta -4))};
  \end{equation*}
  \begin{equation*}
      \sigma = \frac{4(\alpha - s + d(\theta -1))}{\theta(d+\alpha)-2s}.
  \end{equation*}
  Then for any $\rho<p<\sigma'$, and $r>0$ with $\frac{1}{p}-\frac{1}{r} = \frac{(d-\alpha)(2-\theta)}{2(d-\alpha)+s-d\theta}$, and $q\in[1,\infty]$,
  \begin{equation}\label{eq:propRestrictionLorentz}
    \|\widehat{\mu}*f\|_{L^{r,q}(\rd)} \lesssim \|f\|_{L^{p,q}(\rd)}.
  \end{equation}
  What is more, \eqref{eq:propRestrictionLorentz} holds for $r = q_{0}$ and $p = q_{0}'$, where $q_{0}$ is as in Theorem~\ref{thm:endpoint}.
\end{prop}

\subsection{Proof of Theorem~\ref{thm:endpoint}}
  First, by Parseval's formula,
  \begin{equation}\label{eq:Plancherel}
      \|\widehat{f}\,\|_{L^2(\mu)}^2 = \|\widehat{\overline{f}}\,\|_{L^2(\mu)}^2 = \int_{\rd} \overline{f}(\xi)(\widehat{\mu}*f)(\xi)\,d\xi.
  \end{equation}
  H\"older's inequality for Lorentz spaces with $1 = \frac{1}{q_{0}'} + \frac{1}{q_{0}}$ and $1 = \frac{1}{2} + \frac{1}{2}$, and the final part of Proposition~\ref{prop:restrictionLorentz} with $q=2$, $r = q_{0}$, and $p = q_{0}'$ gives
  \begin{equation*}
    \|\widehat{f}\,\|_{L^2(\mu)}^2 = \|\widehat{f}\,\|_{L^{2,2}(\mu)}^2 \leq \|f\|_{L^{q_{0}',2}(\rd)}\,\|f*\widehat{\mu}\|_{L^{q_{0},2}(\rd)} \lesssim \|f\|_{L^{q_{0}',2}(\rd)}^2.
  \end{equation*}
  Since $q_{0}'< 2$, $\|f\|_{L^{q_{0}',2}(\rd)} \leq \|f\|_{L^{q_{0}'}(\rd)}$, and the result follows.

\subsection{Proof of Proposition~\ref{prop:restrictionLorentz}}

Let $\theta\in(0,1]$ and $s>d\theta$ be such that $\J_{s,\theta}(\mu)<\infty$. By following the proof of Theorem~\ref{thm:mainthm} we get the following estimates for the dyadic decomposition $(\mu_{j})_{j}$ of $\mu$ (see \eqref{eq:boundmuj} and \eqref{eq:boundL2}):
  \begin{equation}\label{eq:boundmujEndpoint}
      \|\widehat{\mu}_{j}*f\|_{L^{4/\theta}(\rd)}\lesssim 2^{-j \frac{s-d\theta}{2}}\|f\|_{L^{4/(4-\theta)}(\rd)};
  \end{equation}
  and 
  \begin{equation}\label{eq:boundL2Endpoint}
      \|\widehat{\mu}_{j}*f\|_{L^2(\rd)}\lesssim 2^{j(d-\alpha)}\|f\|_{L^2(\rd)}.
  \end{equation}

  Since $s>d\theta$, we can use Bourgain's interpolation trick between the estimates \eqref{eq:boundmujEndpoint} and \eqref{eq:boundL2Endpoint}. For this define
\begin{equation*}
  \lambda = \frac{s-d\theta}{2(d-\alpha)+s-d\theta}.
\end{equation*}
  Let
  \begin{equation*}
      q_{0} = 2 + 2\frac{(d-\alpha)(2-\theta)}{s-\alpha \theta}
      % p_{0} = 1 + \frac{s-\alpha\theta}{s + 4(d-\alpha) - \theta(2d-\alpha)},
  \end{equation*}
  noting that
  $\frac{(1-\lambda)\theta}{4} + \frac{\lambda}{2} = \frac{1}{q_{0}}$, implies that $B_{\lambda,\infty}\big(L^{4/\theta}(\rd),L^{2}(\rd)\big) = L^{q_{0},\infty}(\rd)$. Hence, applying Lemma~\ref{lma:Bourgainstrick} with $f = 1_{E}$ for an arbitrary measurable set $E$,
  \begin{equation*}
    \|f\|_{L^{4/(4-\theta)}(\rd)}^{1-\lambda}\|f\|_{L^{2}(\rd)}^\lambda = |E|^{\frac{1}{q_{0}'}} = \|f\|_{L^{q_{0}',1}(\rd)},
\end{equation*}
  thus, the convolution operator with $\widehat{\mu}$ is of restricted weak type $(q_{0}',q_{0})$. Thus, for all $f\in L^{q_{0}',1}(\rd)$,
  \begin{equation}\label{eq:firstIterpolIneq}
    \|\widehat{\mu}*f\|_{L^{q_{0},\infty}(\rd)} \lesssim \|f\|_{L^{q_{0}',1}(\rd)}.
  \end{equation}

  The second restricted weak type estimate comes from \cite[Proof of Proposition~2.1]{BS11}, where the authors use a special case of \eqref{eq:firstIterpolIneq} to prove that
  \begin{equation}\label{eq:secondIneq}
    \|\widehat{\mu}_{j}*f\|_{L^2(\rd)} \lesssim 2^{j\frac{d-\alpha}{2}}\|f\|_{L^{q_{0}',1}(\rd)}.
  \end{equation}
  
  We now use Bourgain's interpolation trick between \eqref{eq:boundmujEndpoint} and \eqref{eq:secondIneq}. Let
  \begin{equation*}
    \gamma = \frac{s-d\theta}{d-\alpha + s-d\theta}.
\end{equation*}
  Note that for $\rho$ and $\sigma$ as defined in the proposition, $\frac{1}{\rho} = \frac{(1-\gamma)(4-\theta)}{4} + \frac{\gamma}{q_{0}'}$, and $\frac{1}{\sigma} = \frac{(1-\gamma)\theta}{4} + \frac{\gamma}{2}$. Thus, $B_{\gamma,\infty}\big(L^{4/\theta}(\rd),L^2(\rd)\big) = L^{\sigma,\infty}(\rd)$, and letting $f = 1_{E}$ for an arbitrary measurable set $E$,
  \begin{equation*}
      \|f\|_{L^{4/(4-\theta)}(\rd)}^{1-\gamma}\|f\|_{L^{q_{0}',1}(\rd)}^\gamma = |E|^{\frac{1}{\rho}} = \|f\|_{L^{\rho,1}(\rd)},
  \end{equation*}
  which proves that the convolution operator with $\widehat{\mu}$ is of restricted weak type $(\rho,\sigma)$. That is, for all $f\in L^{\rho,1}(\rd)$,
  \begin{equation}\label{eq:secondInterpolIneq}
      \|\widehat{\mu}*f\|_{L^{\sigma,\infty}(\rd)} \lesssim\|f\|_{L^{\rho,1}(\rd)}.
  \end{equation}

  Finally, using \cite[Theorem~1.4.19]{Gra14} to interpolate between \eqref{eq:secondInterpolIneq} and its dual 
  \begin{equation*}
    \|\widehat{\mu}*f\|_{L^{\rho',\infty}(\rd)} \lesssim\|f\|_{L^{\sigma',1}(\rd)},
\end{equation*}
  we get that for any $\rho<p<\sigma'$, $\frac{1}{p}-\frac{1}{r} = \frac{(d-\alpha)(2-\theta)}{2(d-\alpha)+s-d\theta}$, and $q\in[1,\infty]$,  
  \begin{equation*}
      \|\widehat{\mu}*f\|_{L^{r,q}(\rd)} \lesssim \|f\|_{L^{p,q}(\rd)},
  \end{equation*}
  as we wanted to prove. The final part of the result follows from this last interpolation by taking the interpolation parameter in \cite[Theorem~1.4.19]{Gra14} to be $\frac{1}{2}$, since $\frac{1}{q_{0}}$ is the midpoint between $\frac{1}{\sigma}$ and $\frac{1}{\rho'}$, and $\frac{1}{q_{0}'}$ the midpoint between $\frac{1}{\rho}$ and $\frac{1}{\sigma'}$.

\section{Application 1: Improving the  Stein--Tomas range}

In \cite{HL13,HL16,Che16} the authors proved that there is a large family of measures satisfying \eqref{eq:FrostmanExp} and \eqref{eq:FourierDecay}, for which the range given by Stein--Tomas \eqref{eq:SteinTomas} is optimal. However, from the constructions of Chen~\cite{Che14}, Chen--Seeger~\cite{CS17}, Shmerkin--Suomala~\cite{SS18}, and {\L}aba--Wang~\cite{LW18}, we know that there are other measures (some also satisfying \eqref{eq:FrostmanExp} and \eqref{eq:FourierDecay}) for which the range of $q$ such that \eqref{eq:restrictionp2} holds can be improved. We now give a condition on the Fourier spectrum of measures for which the range given by Stein--Tomas is not optimal; see  Figure~\ref{fig:STimprovement}.

The range of $q$ obtained in Theorem~\ref{thm:mainthm} will improve on the Stein--Tomas range \eqref{eq:SteinTomas} if for some $\theta\in[0,1]$
\begin{equation*}
    2 + \frac{2(d-\alpha)(2-\theta)}{\fs\mu - \alpha\theta} < 2 + \frac{4(d-\alpha)}{\fd\mu}
\end{equation*}
and $\fs\mu>d\theta$, where  $\alpha = \frd\mu$. Therefore, we need
\begin{equation}\label{eq:improvement}
    \fs\mu > \max\Big\{ \theta\big(\alpha - \tfrac{\fd\mu}{2}\big) + \fd\mu, d\theta \Big\}.
\end{equation}
We note that by concavity $\fs\mu \geq(\sd\mu -\fd\mu)\theta+\fd\mu$ always holds.

\begin{figure}[H]
  \centering
  \begin{subfigure}{.5\textwidth}
    \centering
    % THIS IS TOP LEFT 
    \begin{tikzpicture}[scale = 0.7]
      \def\HD{0.9}
      \def\alpha{0.7}
      \def\FD{0.2}
      \begin{axis}[
          axis lines = left,
          ymin = 0,
          ymax = 1.2,
          xmin = 0,
          xmax = 1.05,
          width=9.1cm,
          xlabel=\scriptsize{$\theta$},
          xtick = {1},
          ytick = {0,\FD, \alpha, \HD,1},
          xticklabels = {\scriptsize{$1$}},
          yticklabels = {\scriptsize{$0$},\scriptsize{$\fd\mu$},\scriptsize{$\frd\mu$},\scriptsize{$\sd\mu$},\scriptsize{$d$}}
      ]
      \addplot [
          domain=0:0.9,
          samples=100, 
          color=plotblue,
          name path=HD,
          thick,
      ]
      {\HD};
      \addplot [
          domain= 0.9:1,
          samples=100, 
          style=dashed,    
          thick,      
      ]
      {\HD};

      \addplot [ %LOWER B0UND FOR IMPROVEMENT
          domain=0:1, 
          samples=100, 
          style=dashed,
          color=black,
          name path=lower,
          thick,
      ]
      {x*(\alpha - \FD/2) + \FD};
      \addplot [ % D*THETA
        domain=0:1,
        samples=100,
        style=dashed,
        color = black,
        name path=dtheta,
        thick,
      ]{x};
      
      \addplot [ %LOWER BOUND FOR THE SPECTRUM
        domain=0:1,
        samples=100,
        % style=dashed,
        color = black,
        thick,
      ]{x*(\HD-\FD) + \FD};

      \addplot[plotblue] fill between[of=lower and HD, soft clip={domain=0:0.51}];
      \addplot[plotblue] fill between[of=dtheta and HD, soft clip={domain=0.5:0.95}];
      \end{axis}
    \end{tikzpicture}
  \end{subfigure}%
  \begin{subfigure}{.5\textwidth}
    \centering
    % THIS IS TOP RIGHT
    \begin{tikzpicture}[scale = 0.7]
      \def\HD{0.9}
      \def\alpha{0.32}
      \def\FD{0.4}
      \def\halfFD{0.2}
      \begin{axis}[
          axis lines = left,
          ymin = 0,
          ymax = 1.2,
          xmin = 0,
          width=9.1cm,
          xmax = 1.05,
          xlabel=\scriptsize{$\theta$},
          xtick = {1},
          ytick = {0,\halfFD,\FD, \alpha, \HD,1},
          xticklabels = {\scriptsize{$1$}},
          yticklabels = {\scriptsize{$0$},\scriptsize{$\frac{\fd\mu}{2}$},\scriptsize{$\fd\mu$},\scriptsize{$\frd\mu$},\scriptsize{$\sd\mu$},\scriptsize{$d$}}
      ]
      \addplot [
          domain=-0:0.89,
          samples=100, 
          color=plotblue,
          name path=HD
      ]
      {\HD};
      \addplot [
          domain=0:1, 
          samples=100, 
          style=dashed,
          color=black,
          name path=lower,
          thick,
      ]
      {\FD + x*(\alpha-\FD/2)};
      \addplot [
          domain=0.89:1, 
          samples=100, 
          style=dashed,
          color=black,
          thick,
      ]
      {\HD};

      \addplot [
        domain=0:1,
        samples=100,
        style=dashed,
        color = black,
        name path=dtheta,
        thick
      ]
      {x};
      
      \addplot [
        domain=0:1,
        samples=100,
        color = black,
        thick,
      ]{x*(\HD-\FD) + \FD};
      \addplot[plotblue] fill between[of=lower and HD, soft clip={domain=0:0.45}];
      \addplot[plotblue] fill between[of=dtheta and HD, soft clip={domain=0.44:0.89}];
      \end{axis}
    \end{tikzpicture}
  \end{subfigure}
  \begin{subfigure}{.5\textwidth}
    \centering
    % THIS IS BOTTOM LEFT
    \begin{tikzpicture}[scale = 0.7]
      \def\HD{0.9}
      \def\alpha{0.6}
      \def\FD{0.7}
      \def\halfFD{0.35}
      \begin{axis}[
          axis lines = left,
          ymin = 0,
          ymax = 1.2,
          xmin = 0,
          width=9.1cm,
          xmax = 1.05,
          xlabel=\scriptsize{$\theta$},
          xtick = {1},
          ytick = {0,\halfFD,\FD, \alpha, \HD,1},
          xticklabels = {\scriptsize{$1$}},
          yticklabels = {\scriptsize{$0$},\scriptsize{$\frac{\fd\mu}{2}$},\scriptsize{$\fd\mu$},\scriptsize{$\frd\mu$},\scriptsize{$\sd\mu$},\scriptsize{$d$}}
      ]
      \addplot [
          domain=-0:0.8,
          samples=100, 
          color=plotblue,
          name path=HD
      ]
      {\HD};
      \addplot [
          domain=0:1, 
          samples=100, 
          style=dashed,
          color=black,
          name path=lower,
          thick,
      ]
      {\FD + x*(\alpha-\FD/2)};
      \addplot [
          domain=0.8:1, 
          samples=100, 
          style=dashed,
          color=black,
          thick,
      ]
      {\HD};

      \addplot [
        domain=0:1,
        samples=100,
        style=dashed,
        color = black,
        name path=dtheta,
        thick
      ]
      {x};
      
      \addplot [
        domain=0:1,
        samples=100,
        color = black,
        thick,
      ]{x*(\HD-\FD) + \FD};
      \addplot[plotblue] fill between[of=lower and HD, soft clip={domain=0:0.85}];
      % \addplot[plotblue] fill between[of=dtheta and HD, soft clip={domain=0.44:0.89}];
      \end{axis}
    \end{tikzpicture}
  \end{subfigure}%
  \begin{subfigure}{.5\textwidth}
    \centering
    % THIS IS BOTTOM RIGHT
    \begin{tikzpicture}[scale = 0.7]
      \def\HD{0.9}
      \def\alpha{0.8}
      \def\FD{0.4}
      \begin{axis}[
          axis lines = left,
          ymin = 0,
          ymax = 1.2,
          xmin = 0,
          width=9.1cm,
          xmax = 1.05,
          xlabel=\scriptsize{$\theta$},
          xtick = {1},
          ytick = {0,\FD, \alpha, \HD,1},
          xticklabels = {\scriptsize{$1$}},
          yticklabels = {\scriptsize{$0$},\scriptsize{$\fd\mu$},\scriptsize{$\frd\mu$},\scriptsize{$\sd\mu$},\scriptsize{$d$}}
      ]
      \addplot [
          domain=-0:0.83,
          samples=100, 
          color=plotblue,
          name path=HD
      ]
      {\HD};
      \addplot [
          domain=0:0.83, 
          samples=100, 
          style=dashed,
          color=black,
          name path=lower,
          thick,
      ]
      {\FD + x*(\alpha-\FD/2)};
      \addplot [
          domain=0.83:1, 
          samples=100, 
          style=dashed,
          color=black,
          thick,
      ]
      {\HD};

      \addplot [
        domain=0:1,
        samples=100,
        style=dashed,
        color = black,
        name path=dtheta,
        thick
      ]
      {x};
      
      \addplot [
        domain=0:1,
        samples=100,
        color = black,
        thick,
      ]{x*(\HD-\FD) + \FD};
      \addplot[plotblue] fill between[of=lower and HD, soft clip={domain=0:0.83}];
      \end{axis}
    \end{tikzpicture}
  \end{subfigure}
  \caption{In order to improve the Stein--Tomas range for the restriction problem, we need the Fourier spectrum of $\mu$ to intersect the shaded region, i.e. for some $\theta\in[0,1]$ we need the point ($\theta, \fs \mu)$ to lie in the shaded region. Top left: when $\sd\mu\geq\frd\mu+\frac{\fd\mu}{2}$ and $\frd\mu>\fd\mu$. Top right: when $\sd\mu>\frd\mu + \frac{\fd\mu}{2}$ and $\frd\mu<\fd\mu$. Bottom left: when $\sd\mu<\frd\mu+\frac{\fd\mu}{2}$ and $\frd\mu < \fd\mu$. Bottom right: when $\sd\mu<\frd\mu+\frac{\fd\mu}{2}$  and $\frd\mu > \fd\mu$. The dashed lines are $\theta(\frd\mu - \tfrac{\fd\mu}{2}) + \fd\mu$ and $d\theta$, and the solid line is $\theta(\sd\mu - \fd\mu) + \fd\mu$, which by concavity is always a lower bound for the Fourier spectrum of $\mu$.}
  \label{fig:STimprovement}
  \end{figure}

\section{Application 2: A Stein--Tomas restriction theorem for the Sobolev dimension}

If $\fd\mu>0$, then there is an interval of $\theta$ for which $\fs \mu >d\theta$ (within which we can apply our estimate from  Theorems \ref{thm:mainthm} and \ref{thm:endpoint}). If  we   fix $\theta = 0$, then we (unsurprisingly) obtain the Stein--Tomas range. However, it is interesting to examine what happens at the other end, that is, Corollary \ref{cor:restrictionSpectrum}.  If we apply Corollary \ref{cor:restrictionSpectrum} and use only the trivial estimate from the Fourier spectrum coming from concavity, we obtain a restriction theorem in terms of the Sobolev and Fourier dimensions of $\mu$.  This result, despite being stated in terms of familiar notions of dimension, has been hidden from view up until now perhaps because it is obtained using the Fourier spectrum for a necessarily intermediate $\theta \in (0,1)$. 

While the Frostman dimension gives information regarding the scaling properties of a measure, the Sobolev dimension quantifies how rough the measure is, where roughness is measured in the Sobolev scale. For surface measures   on  manifolds such as the sphere or the cone, these two notions are equivalent. However, for fractal measures the Frostman  dimension will  often be strictly smaller than the Sobolev dimension.

Applying Theorem~\ref{thm:mainthm} with $\theta$ at the right end of the interval of allowable $\theta$ and using concavity of the Fourier spectrum, we get the following restriction theorem for the Sobolev dimension.  In fact, it follows by applying Corollary \ref{cor:restrictionSpectrum}, which we note because its proof is direct from the Fourier spectrum and does not involve the $L^2\to L^2$ estimate coming from the Frostman exponent. 
\begin{cor}\label{thm:restrictionSobolev}
  Let $\mu$ be a non-zero, finite, compactly supported, Borel measure on $\rd$ with $\sd\mu < d$. If
  \begin{equation*}
      q>4 + \frac{4(d-\sd\mu)}{\fd\mu},
  \end{equation*}
  then for all $f\in L^2(\mu)$,
  \begin{equation*}
      \|\widehat{f\mu}\|_{L^q(\rd)}\lesssim \|f\|_{L^2(\mu)}.
  \end{equation*} 
  
  Equivalently, if
  \begin{equation*}
      1\leq q' < 1 + \frac{\fd\mu}{4(d - \sd\mu + \fd\mu) - \fd\mu},
  \end{equation*}
  then for all $f\in L^{q'}(\rd)$,
  \begin{equation*}
      \|\widehat{f}\,\|_{L^2(\mu)}\lesssim \|f\|_{L^{q'}(\rd)}.
  \end{equation*}
\end{cor}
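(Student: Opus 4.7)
The plan is to deduce this directly from Corollary~\ref{cor:restrictionSpectrum}, using only the concavity of the Fourier spectrum together with the endpoint identifications $\fd^0\mu=\fd\mu$ and $\fd^1\mu=\sd\mu$ recalled in Section~2. Since a concave function on $[0,1]$ lies above the chord joining its endpoints, we obtain the elementary lower bound
\[
\fs\mu\ \geq\ \fd\mu+\theta\bigl(\sd\mu-\fd\mu\bigr)\qquad(\theta\in[0,1]).
\]

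Next I would locate the largest $\theta\in(0,1]$ for which this lower bound already forces $\fs\mu>d\theta$. Rearranging $\fd\mu+\theta(\sd\mu-\fd\mu)>d\theta$ gives $\theta<\fd\mu/(d-\sd\mu+\fd\mu)$, a non-empty interval under the standing hypothesis $\sd\mu<d$ (and the non-vacuous case $\fd\mu>0$). For every $\theta$ strictly below this cut-off, Corollary~\ref{cor:restrictionSpectrum} applies and yields the $L^2$ restriction estimate whenever $q>4/\theta$; letting $\theta$ approach the cut-off from below therefore gives the estimate for all
\[
q\ >\ \frac{4(d-\sd\mu+\fd\mu)}{\fd\mu}\ =\ 4+\frac{4(d-\sd\mu)}{\fd\mu},
\]
which is exactly the threshold claimed.

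The equivalent formulation in terms of $q'$ follows by a routine algebraic manipulation using $1/q+1/q'=1$: inverting the threshold and simplifying produces the displayed upper bound on $q'$. I do not anticipate any real obstacle here, as everything beyond the already-proved Corollary~\ref{cor:restrictionSpectrum} is elementary; the only point worth flagging is that the chord lower bound on $\fs\mu$ is tight precisely when the Fourier spectrum is affine on $[0,1]$, so one cannot improve the resulting range using only the two numbers $\fd\mu$ and $\sd\mu$, which is consistent with the corollary being the natural endpoint of the continuum of estimates furnished by Theorem~\ref{thm:mainthm}.
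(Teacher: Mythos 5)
Your proposal is correct and follows essentially the same route as the paper: the chord lower bound $\fs\mu\geq\fd\mu+\theta(\sd\mu-\fd\mu)$ from concavity, the cut-off $\theta=\fd\mu/(d-\sd\mu+\fd\mu)$ where this bound meets $d\theta$, and an application of Corollary~\ref{cor:restrictionSpectrum}, with the $q'$ form obtained by taking conjugate exponents. The only (harmless) difference is that you approach the cut-off from below, which meshes slightly more cleanly with the strict inequality $\fs\mu>d\theta$ in Corollary~\ref{cor:restrictionSpectrum} than the paper's direct choice of $\theta$ at the crossing point.
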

\begin{proof}
  By concavity of the Fourier spectrum for measures, $\fs\mu\geq \theta(\sd\mu - \fd\mu) + \fd\mu$. Therefore, choosing $\theta$ such that $\theta(\sd\mu - \fd\mu)+\fd\mu = d\theta$, i.e. $\theta = \frac{\fd\mu}{d-\sd\mu+\fd\mu}$,   Corollary \ref{cor:restrictionSpectrum} directly gives that the desired restriction estimate holds for 
  \begin{equation*}
  q> \frac{4}{\theta} = \frac{4(d-\sd\mu + \fd\mu)}{\fd\mu},
  \end{equation*}
as required.
\end{proof}

Similarly, by concavity (see \cite[Proof of Theorem~1.1]{Fra24}), we obtain the following corollary from Theorem~\ref{thm:endpoint}.
\begin{cor}\label{thm:restrictionSobolevEndpoint}
  Let $0<s<d$, $0<t<s$, and let $\mu$ be a non-zero, compactly supported, Borel measure on $\rd$, such that
  \begin{equation*}
      I_{s}(\mu)< \infty;
  \end{equation*}
  and
  \begin{equation*}
      \big| \widehat{\mu}(\xi) \big|\lesssim |\xi|^{-\frac{t}{2}}.
  \end{equation*}
  Then for all $f\in L^2(\mu)$,
  \begin{equation*}
      \|\widehat{f\mu}\|_{L^{4 + \frac{4(d-s)}{t}}(\rd)}\lesssim \|f\|_{L^2(\mu)}.
  \end{equation*}

  Equivalently, for all $f\in L^{1+\frac{t}{4(d - s + t) - t}}(\rd)$,
  \begin{equation*}
      \|\widehat{f}\,\|_{L^{2}(\mu)}\lesssim \|f\|_{L^{1+\frac{t}{4(d - s + t) - t}}(\rd)}.
  \end{equation*}
\end{cor}

The previous corollaries beat the Stein--Tomas range for measures $\mu$ with $\frd\mu + \fd\mu/2<\sd\mu$.  For fractal or multifractal measures this is quite typical, since the Fourier dimension is often small and the Frostman dimension is often strictly smaller than the Sobolev dimension (the Frostman dimension can even be as small as $\fd\mu/2$  \cite{LL24+}).  On the other hand,  if the Frostman and Sobolev dimensions agree, then the previous corollary never beats Stein--Tomas, and this is the case for surface measures.  That said,  Corollary \ref{cor:restrictionSpectrum} may beat Stein--Tomas, even for surface measures.

\section{Application 3: Restriction for the cone} \label{sec:cone}

In this section we exhibit our main theorem by applying it to an important example in restriction theory. Let $d \geq 3$ and define the truncated cone in $\rd$ by
\begin{equation*}
    C^{d-1} = \{ (t,|t|) : t\in\R^{d-1},\,|t|\leq 1 \}\subseteq\rd,
\end{equation*}
and let $\nu_{d-1}$ be the surface measure on the cone $C^{d-1}$. This is a well-known and important example in restriction theory.

\begin{conj}[cone restriction conjecture] \label{coneconj}
  The estimate $\|\widehat{f}\|_{L^{p'}(\nu_{d-1})}\lesssim_{p,q} \|f\|_{L^{q'}(\rd)}$ holds for all $f\in L^{q'}(\rd)$ if and only if
  \begin{equation*}
      \frac{d-2}{p'}\geq \frac{d}{q} \quad \text{and} \quad q>\frac{2(d-1)}{d-2},
  \end{equation*}
  and for $p=2$ if and only if
  \begin{equation*}
      q\geq  \frac{2d}{d-2}.
  \end{equation*}
\end{conj}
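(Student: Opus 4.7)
The cone restriction conjecture is a central open problem in Euclidean harmonic analysis, resolved completely only in dimension $d=3$ (Barcel\'o), so a full proof is out of reach.  My plan is therefore to use the Fourier-spectrum machinery of this paper to establish the largest portion of the statement that these methods allow, and to flag the remainder as the main obstacle.

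For the sufficiency direction at $p=2$, i.e.\ that \eqref{eq:restriction} holds for $q \geq 2d/(d-2)$, I would apply Theorem~\ref{thm:mainthm} to $\nu_{d-1}$.  The required ingredients are $\frd \nu_{d-1} = d-1$, $\fd \nu_{d-1} = d-2$, and the full curve $\theta \mapsto \fs \nu_{d-1}$; the last of these is the content of Proposition~\ref{thm:coneSpectrum}.  Since the cone is essentially Salem in its $(d-1)$ non-flat directions, optimising the right-hand side of Theorem~\ref{thm:mainthm} over $\theta$ should recover exactly the classical Stein--Tomas threshold $q = 2d/(d-2)$, which is the desired endpoint.

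For the necessity direction at $p=2$ and the necessary $(p,q)$ conditions in general, I would proceed in two ways.  First, apply Theorem~\ref{converse}: feeding the explicit spectrum from Proposition~\ref{thm:coneSpectrum} into the condition $\fs \nu_{d-1} < d\theta$ should force failure for $q < 2d/(d-2)$.  Second, use a Knapp-type counterexample: testing on a smooth bump adapted to a thin tube dual to a small cap of the cone recovers both the scaling condition $\frac{d-2}{p'} \geq \frac{d}{q}$ and the sharper threshold $q > \frac{2(d-1)}{d-2}$, the latter exploiting the single flat direction of the cone.

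The genuine obstacle is the sufficiency direction for $p \neq 2$ when $d \geq 4$, which is the actual open content of the conjecture.  The Fourier-spectrum approach of this paper is intrinsically $L^2$-based, passing through extension estimates with $p=2$ via Young's inequality, so it cannot by itself reach beyond the $p=2$ endpoint.  A complete attack would need multilinear, Kakeya, or decoupling techniques that lie outside the scope of the present work; hence my proposal settles the $p=2$ case and the necessary $(p,q)$ conditions, leaving the mixed-exponent sufficiency as the outstanding open problem.
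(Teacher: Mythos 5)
This statement is a \emph{conjecture} in the paper: the authors do not prove it, they only cite the known cases (Barcelo, Wolff, Ou--Wang for general $p$ in $d=3,4,5$; C\'ordoba--Stein/Strichartz for $p=2$ in all dimensions) and then benchmark Theorem~\ref{thm:mainthm} and Theorem~\ref{converse} against these thresholds. So there is no paper proof to match, and the real issue is that the positive half of your plan does not deliver what you claim. Your ingredients are off: Proposition~\ref{thm:coneSpectrum} gives $\fd\nu_{d-1}=\min\{2,d-2\}$, so $\fd\nu_{d-1}=2$ (not $d-2$) for $d\geq 5$, and $\nu_{d-1}$ is not Salem since $\sd\nu_{d-1}=d-1>\fd\nu_{d-1}$. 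Consequently, optimising Theorem~\ref{thm:mainthm} over $\theta$ with the cone spectrum yields $q>2d/(d-2)$ only for $d=3,4$ (and even then misses the endpoint $q=2d/(d-2)$ asserted in the conjecture), while for $d\geq 5$ the infimum sits at the phase transition $\theta=\frac{d-4}{d-2}$ and gives only $q>\frac{3d-4}{d-2}$, which is strictly worse than the sharp threshold $\frac{2d}{d-2}$ (though better than the Stein--Tomas range $q>4$). Note also that the ``classical Stein--Tomas threshold'' for the cone is not $2d/(d-2)$ when $d\geq5$; it is $4$. The sharp $p=2$ result, endpoint included, is the C\'ordoba--Stein/Strichartz theorem and is not reachable by the $L^2$-interpolation machinery of this paper, so your first step would fail exactly where the statement has content.

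On the necessity side there is a similar overclaim: Theorem~\ref{converse} does not force failure for $q<2d/(d-2)$. The spectrum crosses the diagonal $d\theta$ at $\theta=\frac{d-2}{d-1}$, so it only gives failure for $q<\frac{2(d-1)}{d-2}$, which is strictly smaller; the paper states precisely this. The necessity of $q\geq 2d/(d-2)$ at $p=2$ must come from the Knapp cap computation, which (as you note for general $p$) yields $\frac{d-2}{p'}\geq\frac{d}{q}$ and hence $q\geq\frac{2d}{d-2}$ at $p'=2$; conversely, the threshold $q>\frac{2(d-1)}{d-2}$ is really the $f\equiv 1$ computation (concentration of $\widehat{\nu_{d-1}}$ near the dual cone), i.e.\ exactly what Theorem~\ref{converse} encodes, rather than a tube-type Knapp example. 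Finally, the genuinely open part is the mixed-exponent sufficiency for $d\geq 6$ (it is settled for $d=3,4,5$), so even your framing of what remains is slightly off; a correct partial account would be: Knapp plus Theorem~\ref{converse} give the stated necessary conditions, Theorem~\ref{thm:mainthm} gives $q>\frac{2d}{d-2}$ for $d=3,4$ and $q>\frac{3d-4}{d-2}$ for $d\geq5$ at $p=2$, and everything beyond that lies outside the paper's methods.
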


The general restriction conjecture for the cone has been proven only in dimensions $d = 3, 4, 5$ by Bacelo \cite{Bar85}, Wolff \cite{Wol01}, and Ou--Wang \cite{OW22}, respectively.   The case $p=2$ was established in unpublished work of Córdoba and Stein in all dimensions, see also \cite{Str77} where Strichartz generalised the result to quadratic surfaces.

In order to apply our results to the surface measure on the cone, we first derive the Fourier spectrum for this measure.  This result may be of independent interest.  
\begin{prop} \label{thm:coneSpectrum}
  Let $d \geq 3$ be an integer,  and $\nu_{d-1}$ be the surface measure on the cone.
  % \[
  % C^{d-1} = \{ (t, |t|) : t \in \mathbb{R}^{d-1}, \, |t| \leq 1\} \subseteq \rd.
  % \]
  Then
  \[
  \fs \nu_{d-1} =  \min\Big\{ 2+ (d-1)\theta   ,  \ (d-2) + \theta  \Big\}
  \]
  for all $\theta \in [0,1]$.  This has a phase transition at $\theta = \frac{d-4}{d-2}$ whenever $d\geq 5$, but for $d=3,4$ it is affine.  Moreover, the Fourier dimension of $\nu_{d-1}$ is $\min\{2,d-2\}$  and the Sobolev (and Frostman) dimension of $\nu_{d-1}$  is $d-1$. 
  \end{prop}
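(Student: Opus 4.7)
The plan is to reduce $\widehat{\nu_{d-1}}$ to a one-dimensional oscillatory integral via Bessel functions, extract two distinct pointwise decay regimes, and then integrate these in the definition of $\J_{s,\theta}$. Parametrising the cone by $t\mapsto(t,|t|)$ on the ball $B^{d-1}(0,1)\subset\R^{d-1}$ and switching to polar coordinates $t=r\omega$ on $\R^{d-1}$, one obtains
\[
\widehat{\nu_{d-1}}(\xi,\tau) \;=\; c\int_0^1 r^{d-2}\,e^{-2\pi i \tau r}\,\frac{J_{(d-3)/2}(2\pi r|\xi|)}{(2\pi r|\xi|)^{(d-3)/2}}\,dr.
\]
Writing $(\xi,\tau)=R(\cos\phi\cdot\omega',\sin\phi)$ with $R=|(\xi,\tau)|$ and $\omega'\in S^{d-2}$, and applying the large-argument asymptotic for $J_\nu$, I would rewrite $\widehat{\nu_{d-1}}$ as a sum of one-dimensional oscillatory integrals with linear phases of the form $Rr(\sin\phi\pm\cos\phi)$ and polynomial amplitude in $r$.

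The next step is to extract the two pointwise regimes that will dictate the answer. Away from the axis (where $R\cos\phi\gtrsim 1$), integration by parts on the $r$-interval combined with the Bessel asymptotics yields
\[
|\widehat{\nu_{d-1}}(\xi,\tau)|\;\lesssim\; R^{-(d-2)/2}(\cos\phi)^{-(d-2)/2}\min\!\Bigl(1,\tfrac{1}{R|\sin\phi-\cos\phi|}\Bigr),
\]
together with its analogue centred at $\sin\phi=-\cos\phi$, so that $|\widehat{\nu_{d-1}}|\approx R^{-(d-2)/2}$ on a tubular neighbourhood of the cone of angular width $\approx 1/R$ and decays strictly faster elsewhere. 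Near the axis (where $R\cos\phi\lesssim 1$) the Bessel factor is bounded, and a direct integration by parts in $r$ using the phase $e^{-2\pi i\tau r}$ with $|\tau|\approx R$, picking up the endpoint contribution at $r=1$, gives $|\widehat{\nu_{d-1}}|\lesssim 1/R$. Matching lower bounds of the same order on sets of positive measure in each region follow from the leading stationary-phase and endpoint terms.

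With these pointwise estimates in hand, computing $\J_{s,\theta}(\nu_{d-1})$ in polar coordinates $\eta=R\omega$ on $\R^d$ is a bookkeeping exercise. The cone tube on the sphere of radius $R$ has surface area $\approx 1/R$ and contributes an $R$-integrand of order $R^{(s-d+2)/\theta-2}$, which is integrable at infinity iff $s<(d-2)+\theta$. The axis cap has area $\approx R^{-(d-1)}$ and contributes an integrand of order $R^{(s-2)/\theta-d}$, which is integrable iff $s<2+(d-1)\theta$. A dyadic decomposition in the off-cone parameter $|\sin\phi-\cos\phi|$ shows that the intermediate bands are dominated by the cone contribution for every $\theta\in(0,1)$. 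Taking the minimum of the two thresholds gives $\fs\nu_{d-1}=\min\{(d-2)+\theta,\,2+(d-1)\theta\}$, with the two affine pieces crossing at $\theta=(d-4)/(d-2)$ when $d\geq 5$ and a single piece being active throughout $[0,1]$ when $d\in\{3,4\}$. The Fourier and Sobolev dimensions follow by evaluating at $\theta=0$ and $\theta=1$, while $\frd\nu_{d-1}=d-1$ is immediate from the smoothness of the underlying surface.

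The hardest part will be justifying the matching pointwise lower bounds rigorously, in particular ruling out accidental cancellation between the two phases $Rr(\sin\phi\pm\cos\phi)$ and the endpoint contributions on positive-measure subsets of the cone and axis tubes. This is where the Bessel asymptotics need to be combined carefully with interior and boundary stationary-phase analysis rather than either being invoked as a black box, and where the separation between the cone regime and the axis regime (and the matching at $R\cos\phi\approx 1$) requires the most care.
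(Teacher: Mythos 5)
Your plan is correct and, for most of its length, coincides with the paper's argument: you use the same Bessel/oscillatory-integral representation of $\widehat{\nu_{d-1}}$, the same pointwise bounds ($|\widehat{\nu_{d-1}}(x,y)|\lesssim |x|^{\frac{2-d}{2}}\min\{1,\bigl|y\pm|x|\bigr|^{-1}\}$ away from the axis, $\lesssim |y|^{-1}$ near it), and the same dyadic bookkeeping giving $\J_{s,\theta}(\nu_{d-1})<\infty$ whenever $s<\min\{(d-2)+\theta,\,2+(d-1)\theta\}$; your observation that the bands $\bigl|y\pm|x|\bigr|\approx\delta$ are dominated by $\delta\approx 1$ is exactly what the paper's three-region split accomplishes. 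Where you genuinely differ is the sharpness half. You derive divergence from matching lower bounds on two regions: the $O(1)$-thickness neighbourhood of the dual cone $\{|y|=|x|\}$, where $|\widehat{\nu_{d-1}}|\gtrsim R^{-\frac{d-2}{2}}$, and the axis cap, where the endpoint term at $v=1$ gives $|\widehat{\nu_{d-1}}|\gtrsim R^{-1}$. The paper uses the axis cap as well, but for the $(d-2)+\theta$ branch it integrates instead over $\{|x|\geq 1,\ |y|\leq\eps\}$ and asserts $\bigl|\int_0^1 v^{d-2}\widehat{\sigma_{d-2}}(xv)\,dv\bigr|\approx|x|^{-\frac{d-2}{2}}$ there; your choice of region is the more robust one, because on that slice the inner integral is a constant multiple of the Fourier transform of the unit ball in $\R^{d-1}$, which by the very Bessel asymptotics both proofs use oscillates with size $\approx|x|^{-\frac{d}{2}}$ (the non-stationary $v$-integration costs an extra factor $|x|^{-1}$), and that size would only force divergence for $s\geq d+\theta$; the dual-cone tube, where one of the two phases $2\pi v(|x|\mp y)$ is slowly varying, is precisely where the lower bound of size $R^{-\frac{d-2}{2}}$ genuinely holds. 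Concerning the step you flag as hardest: you do not need two-sided control on the full width-one tube; restrict to the sub-tube $\bigl|\,|y|-|x|\,\bigr|\leq c$ for a small absolute constant $c$, on which the slowly varying term has modulus $\gtrsim|x|^{\frac{2-d}{2}}\int_0^1 v^{\frac{d-2}{2}}\cos(2\pi c v)\,dv\gtrsim|x|^{\frac{2-d}{2}}$, while the opposite-phase term and the Bessel remainder are $O(|x|^{-\frac{d}{2}})$; this sub-tube still carries a fixed proportion of the measure at every scale, so your thresholds are unaffected, and the analogous endpoint-term lower bound with $|x|\leq\eps$ settles the axis cap. With these details filled in, your computation of the two critical exponents, the phase transition at $\theta=\frac{d-4}{d-2}$, and the values of the Fourier, Sobolev and Frostman dimensions all go through as you describe.
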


  \begin{figure}[H]
    \begin{tikzpicture}[scale = 0.8]
      \begin{axis}[
          axis lines = left,
          xmin = 0,
          xmax = 1.05,
          ymin= 0,
          ymax = 5.2,
          ytick = {1,2,3,4,5},
          yticklabels = {$1$,$2$,$3$,$4$,$5$},
          xlabel=$\theta$,
          xtick = {0.2,0.4,0.6,0.8,1},
          xticklabels = {$0.2$,$0.4$,$0.6$,$0.8$,$1$},
      ]
      \foreach \d in {3,4,5,6} {
        \addplot [
          domain=0:1, 
          thick,
          samples=100, 
      ]
      {min(2 + (\d-1)*x, \d-2 + x)};
      }
      \end{axis}
  \end{tikzpicture}
  \caption{The Fourier spectrum of $\nu_{d-1}$ on the cone $C^{d-1}$ for $d = 3, \ldots, 6$; see Proposition~\ref{thm:coneSpectrum}.}
\end{figure}
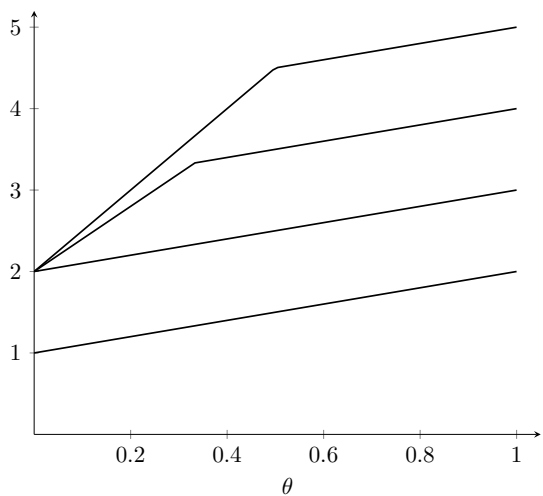

  We defer the proof of the theorem to the end of the section; see Subsection~\ref{coneproof}.
  
  From Theorem~\ref{thm:mainthm} we know that $\|\widehat{f\nu_{d-1}}\|_{L^q(\rd)}\lesssim \|f\|_{L^2(\nu_{d-1})}$ if
  \begin{equation*}
      q> 2 + 2\inf\limits_{\substack{\theta\in[0,1] \\ \fs\mu>d\theta}}\frac{(d-\alpha)(2-\theta)}{\fs\mu-\alpha \theta},
  \end{equation*}
  For $d=3,4$ the spectrum is affine, and we can do nothing more than recover Stein--Tomas (respectively, $q>6,4$), but this range is sharp in both of these cases.   For $d \geq 5$ we can do better than Stein--Tomas.   Here the infimum above is realised at the phase transition $\theta = \frac{d-4}{d-2}$ and, therefore, for any $f\in L^2(\nu_{d-1})$, the estimate $\|\widehat{f\nu_{d-1}}\|_{L^q(\rd)}\lesssim \|f\|_{L^2(\nu_{d-1})}$ holds if 
  \begin{equation*}
      q > \frac{3d - 4}{d-2}.
  \end{equation*}
The Stein--Tomas range for all $d \geq 5$ is $q>4$.  We also get a converse result from Theorem \ref{converse}.  For all $d \geq 3$, the Fourier spectrum crosses the diagonal $d\theta$ at $\theta = \frac{d-2}{d-1}$ and so Theorem \ref{converse} implies that the general restriction estimate \eqref{eq:restriction} fails for 
\[
q<\frac{2(d-1)}{d-2}.
\]
This outperforms the estimate $q<\frac{2d}{\sd \nu_{d-1}} = \frac{2d}{ d-1 }$ for all $d \geq 3$ and, moreover, recovers the second condition in the cone restriction conjecture \ref{coneconj}. 

  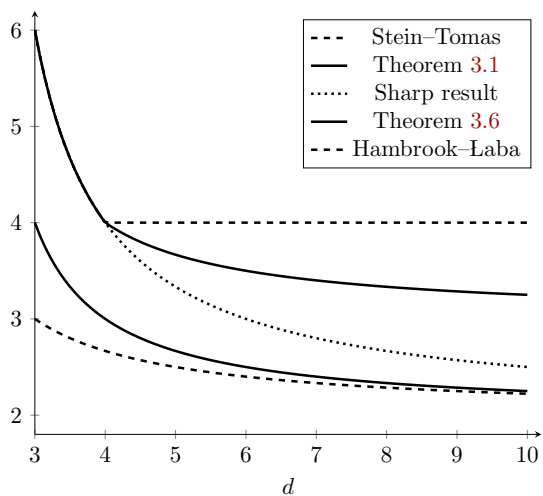
\begin{figure}[H]
    \begin{tikzpicture}[scale=0.8]
      \begin{axis}[
        axis lines = left,
        xmin = 3,
        xmax = 10.2,
        ymin= 1.8,
        ymax = 6.2,
        ytick = {2, 3,4,5,6},
        yticklabels = {$2$, $3$,$4$,$5$,$6$},
        xlabel=$d$,
        xtick = {3,4, 5, 6, 7, 8, 9, 10},
        xticklabels = {$3$,$4$,$5$,$6$,$7$,$8$,$9$,$10$},
    ]
   
    \addplot [ %Stein--Tomas
        domain=3:10, 
        thick,
        samples=100, 
        % purp,
        very thick,
        dashed
    ]
    {max(4,(2*x)/(x-2))};
    \addplot [ %CFdO
    domain=3:10, 
    very thick,
    samples=100,
    % lightblue,
    ]
    {max((2*x)/(x-2),(3*x - 4)/(x-2))};

    \addplot [ %Sharp
    domain=3:10, 
    very thick,
    samples=100, 
    dotted,
    % yellowcolour,
    ]
    {max(2*(x-1)/(x-2),(2*x)/(x-2))};

    \addplot [ %CFdOlower
    domain=3:10, 
    very thick,
    samples=100, 
    % lightblue,
    ]
    {2*(x-1)/(x-2)};

    \addplot[ %HL lower
        domain=3:10,
        very thick,
        samples=100,
        dashed,
        % purp,
    ]
    {2*x/(x-1)};

    \legend{Stein--Tomas, Theorem~\ref{thm:mainthm}, Sharp result,Theorem \ref{converse}, Hambrook--{\L}aba}
    \end{axis}
    \end{tikzpicture}
    \caption{Bounds for the range of $q$ for the restriction estimate \eqref{eq:extension} to hold for the cone in $\rd$. The dashed lines are the  Stein--Tomas upper bound and the Hambrook--{\L}aba lower bound, the dotted line is the sharp result, and the solid lines are our upper and lower bounds for the threshold.  These plots should be understood as only applying to integer points in the domain, but we included the full curve for aesthetic reasons.}\label{fig:restrictionCone}
  \end{figure}

  \subsection{Proof of Proposition~\ref{thm:coneSpectrum}}  \label{coneproof}

  Throughout the proof we   write $z=(x,y) \in \mathbb{R}^d$ with $x \in \mathbb{R}^{d-1}$ and $y \in \mathbb{R}$ and $(u,v) \in \mathbb{R}^d$ with $u \in \mathbb{R}^{d-1}$ and $v \in \mathbb{R}$. We also write $a\vee b = \max\{ a,b \}$ and $a\wedge b = \min\{ a,b \}$ for real numbers $a,b$. First observe that, up to normalisation constant,  $\nu_{d-1}$ disintegrates as
  \[
  \int f(u,v) \, d \nu_{d-1}(u,v) = \int_{0}^{1}   v^{d-2}\int_{S^{d-2}} f(uv,v) \, d \sigma_{d-2}(u) \, dv
  \]
  for $f \in L^1(\nu_{d-1})$ where $\sigma_{d-2}$ is the surface measure on the $(d-2)$-dimensional sphere $S^{d-2}$ in $\mathbb{R}^{d-1}$. Therefore,
  \begin{align*}
  \widehat{\nu_{d-1}} (z)  &= \int_{0}^{1}  v^{d-2} \int_{S^{d-2}} e^{-2\pi i (uv \cdot x + v \cdot y)}  \, d \sigma_{d-2}(u) \, dv  \\ 
  &= \int_{0}^{1}  v^{d-2} \widehat{ \sigma_{d-2}}(vx)   e^{-2\pi i  y  v}  \, dv \\  
  &= c(d-1) |x|^{\frac{3-d}{2}} \int_{0}^{1}  v^{\frac{d-1}{2}} J_{\frac{d-3}{2}}(2 \pi v |x|)  e^{-2\pi i  y v}  \, dv  \qquad \text{(by \cite[(3.41)]{Mat15})}
  \end{align*}
where $c(d-1)$ is a constant depending on the ambient dimension $d-1$ and $J_{\frac{d-3}{2}}$ is a Bessel function. We   use the above expression to derive estimates for $|\widehat{\nu_{d-1}} (z)| $ in different situations depending on $x,y$.

Recall the standard  asymptotic for   Bessel functions (e.g.~\cite[(3.37)]{Mat15})  which gives   
\begin{equation} \label{besselcos}
J_{m}(t) =     \frac{\sqrt{2}}{\sqrt{\pi t}} \cos\big(  t -\pi m/2- \pi/4\big) + R_m(t)
\end{equation}
where $R_m$ is a smooth error function satisfying
\[
|R_m(t) | \lesssim t^{-3/2}.
\]
Inserting this above gives
  \begin{align}
  \lvert \widehat{\nu_{d-1}} (z) \rvert &\lesssim  |x|^{\frac{2-d}{2}} \left\lvert  \int_{0}^{1}  v^{\frac{d-2}{2}} 
     \cos\big(2 \pi v |x| - \tfrac{\pi}{4}(d-2)\big)
   e^{-2\pi i  y v}  \, dv \right\rvert   \nonumber \\
& \hspace{3cm} + \ |x|^{\frac{3-d}{2}} \left\lvert  \int_{0}^{1}  v^{\frac{d-1}{2}} 
  R_{\frac{d-3}{2}} (2\pi v |x|)   e^{-2\pi i  y v}  \, dv \right\rvert.  \label{transform}
  \end{align}
We estimate these integrals separately, starting with the first one. Both estimates use   the method of stationary phase. Using Euler's identity,
  \begin{align}
  &\, 2\left\lvert  \int_{0}^{1}  v^{\frac{d-2}{2}} 
     \cos\big(2 \pi v |x| - \tfrac{\pi}{4}(d-2)\big)
   e^{-2\pi i  y v}  \, dv \right\rvert \nonumber\\ 
   &\leq \left\lvert  \int_{0}^{1}  v^{\frac{d-2}{2}} 
   e^{-2\pi i  y v+i \big(2 \pi v |x| - \tfrac{\pi}{4}(d-2)\big)}  \, dv \right\rvert  + \left\lvert  \int_{0}^{1}  v^{\frac{d-2}{2}} 
      e^{-2\pi i  y v -i \big(2 \pi v |x| - \tfrac{\pi}{4}(d-2)\big)}  \, dv \right\rvert. \label{integralstoest}
  \end{align}
  Both of these integrals can be expressed as
  \[
  \int_{0}^{1} \psi(v)   e^{i  |y| \phi(v)   }  \, dv
  \]
  for 
  \[
  \psi(v) = v^{\frac{d-2}{2}}  e^{\pm i\tfrac{\pi}{4}(d-2)}
  \]
  and
  \[
  \phi(v) =   -2\pi v \Big(\text{sgn}(y)  \pm  \frac{|x|}{|y|}\Big).
  \]
  Then
  \[
  \phi'(v) \equiv -2\pi \Big(\text{sgn}(y)  \pm  \frac{|x|}{|y|}\Big)
  \]
   and, by the stationary phase estimate given in \cite[Corollary~14.3]{Mat15},
  \[
  \left\lvert \int_{0}^{1} \psi(v)   e^{i  |y| \phi(v)   }  \, dv   \right\rvert \lesssim |y|^{-1} \left( |\psi(1) | + \int_0^1 |\psi'(v)| \, dv \right) \lesssim |y|^{-1}
  \]
  whenever $\frac{|x|}{|y|}$ is bounded away from 1.  This gives the following bound for the first term in \eqref{transform}:
  \begin{equation} \label{great1}
 |x|^{\frac{2-d}{2}} \left\lvert  \int_{0}^{1}  v^{\frac{d-2}{2}} 
     \cos\big(2 \pi v |x| - \tfrac{\pi}{4}(d-2)\big)
   e^{-2\pi i  y v}  \, dv \right\rvert    \lesssim  |x|^{\frac{2-d}{2}}|y|^{-1} 
  \end{equation}
  whenever $ \frac{|x|}{|y|}$ is bounded away from 1.

  Now we consider the second term from \eqref{transform}. By the stationary phase estimate given in \cite[Corollary~14.3]{Mat15},  we get
\begin{align*}  
\left\lvert\int_{0}^{1}  v^{\frac{d-1}{2}} 
  R_{\frac{d-3}{2}} (2\pi v |x|)   e^{-2\pi i  y v}  \, dv \right\rvert & \lesssim  |y|^{-1} \left( | R_{\frac{d-3}{2}} (2\pi |x|)| + \int_0^1 \left\lvert \frac{d}{dv}  v^{\frac{d-1}{2}}  R_{\frac{d-3}{2}} (2\pi v |x|) \right\rvert \, dv\right)   \\
&\lesssim  |y|^{-1} \left(|x|^{-3/2} + |x|^{-1/2} \right) 
\end{align*}
where we have used basic properties of Bessel functions (see \cite[(3.36), (3.37), and (3.38)]{Mat15}) to obtain the final estimate. This    gives the following bound for the second term in \eqref{transform}:
  \begin{equation} \label{great2}
|x|^{\frac{3-d}{2}} \left\lvert  \int_{0}^{1}  v^{\frac{d-1}{2}} 
  R_{\frac{d-3}{2}} (2\pi v |x|)   e^{-2\pi i  y v}  \, dv \right\rvert \lesssim |x|^{\frac{2-d}{2}}   |y|^{-1}.
  \end{equation}
Combining \eqref{great1},  \eqref{great2} and \eqref{transform} we get
 \begin{equation} \label{goodforbigy}
  \lvert \widehat{\nu_{d-1}} (z) \rvert \lesssim |x|^{\frac{2-d}{2}}   |y|^{-1}
\end{equation}
whenever $\frac{|x|}{|y|}$ is bounded away from 1. Note that the above calculation in fact gives
\begin{equation}\label{eq:withmins}
  \lvert \widehat{\nu_{d-1}} (z) \rvert \lesssim \big( |x|^{\frac{2-d}{2}} \wedge 1 \big)  \big(  |y|^{-1} \wedge 1 \big),
\end{equation}
whenever $\frac{|x|}{|y|}$ is bounded away from 1.  This aesthetically less pleasing but sharper estimate will be used in some cases below. 

For $\frac{|x|}{|y|}$ close to 1 we need a different approach. By \eqref{transform}, we always have the bound
  \begin{equation} \label{uniform}
  \lvert \widehat{\nu_{d-1}} (z) \rvert  \lesssim  |x|^{\frac{2-d}{2}}
  \end{equation}
  but can beat this estimate even when $ \frac{|x|}{|y|}$ is quite close to 1. For $  \frac{|x|}{|y|} \neq 1$ we re-express the integrals from \eqref{integralstoest} as
  \[
  \int_{0}^{1} \psi(v)   e^{i  |y  \pm  |x| | \phi(v)   }  \, dv
  \]
  for 
  \[
  \psi(v) = v^{\frac{d-2}{2}}  e^{\pm i\tfrac{\pi}{4}(d-2)}
  \]
  and
  \[
  \phi(v) =   -2\pi v \, \text{sgn}(y  \pm  |x|).
  \]
  Then \cite[Corollary~14.3]{Mat15} gives
  \[
  \int_{0}^{1} \psi(v)   e^{i  |y  \pm  |x| | \phi(v)   }  \, dv  \lesssim|y  \pm  |x| |^{-1}
  \]
  and, using  \eqref{transform},
  \begin{equation} \label{hard}
  \lvert \widehat{\nu_{d-1}} (z) \rvert  \lesssim  |x|^{\frac{2-d}{2}}|y  \pm  |x| |^{-1}
  \end{equation}
  which beats \eqref{uniform} whenever $|y  \pm  |x| | \geq 1$.   These estimates are enough for our purposes.

  From now on fix $\theta \in (0,1]$.  We begin with the lower bound.  Write
  \begin{align*}
  \J_{s,\theta}( \nu_{d-1})^{1/\theta}  &= \int_{\substack{z=(x,y)\in \mathbb{R}^{d-1} \times \mathbb{R}   : \\
  \frac{|x|}{|y|} \notin (1/2,2)}} \  \lvert \widehat{\nu_{d-1}} (z) \rvert ^{\frac{2}{\theta}}  |z|^{\frac{s}{\theta}-d} \, dz \\
  & +  \int_{\substack{z=(x,y)\in \mathbb{R}^{d-1} \times \mathbb{R}   : \\
  \frac{|x|}{|y|} \in (1/2,2)\text{ and }  
  |y  \pm  |x| | \geq 1}} \lvert \widehat{\nu_{d-1}} (z) \rvert^{\frac{2}{\theta}}  |z|^{\frac{s}{\theta}-d} \, dz \\
  & +  \int_{\substack{z=(x,y)\in \mathbb{R}^{d-1} \times \mathbb{R}   : \\
  \frac{|x|}{|y|} \in (1/2,2) \text{ and } |y  \pm  |x| | < 1}}  \lvert \widehat{\nu_{d-1}} (z) \rvert ^{\frac{2}{\theta}}  |z|^{\frac{s}{\theta}-d} \, dz \\ \\
  &=: I_1 + I_2+I_3.
  \end{align*}
We estimate these three integrals separately. Let $B_{k}(0,r)$ denote the    open $r$-ball  in $\mathbb{R}^{k}$ centred at the origin. First, using  \eqref{goodforbigy} and \eqref{eq:withmins},
  \begin{align*}
  I_1 &\lesssim \int_{\substack{  (x,y)\in \mathbb{R}^{d-1} \times \mathbb{R}   : \\
  |x| \geq |y|\geq1}}  \bigg(  |x|^{\frac{2-d}{2}}  |y|^{-1}    \bigg)^{\frac{2}{\theta}}  |x|^{\frac{s}{\theta}-d} \, dx \, dy \\
  & \qquad   \qquad   \qquad + \  \int_{\substack{(x,y)\in \mathbb{R}^{d-1} \times \mathbb{R}  : \\
  1\leq |x| \leq |y|}}    \bigg(  |x|^{\frac{2-d}{2}}  |y|^{-1}    \bigg)^{\frac{2}{\theta}}  |y|^{\frac{s}{\theta}-d} \, dx \, dy \\
  &= \int_{x\in \mathbb{R}^{d-1}\backslash B_{d-1}(0,1)}|x|^{\frac{s}{\theta}-d+\frac{2-d}{\theta}} \int_{\substack{y \in \mathbb{R}   : \\
    1\leq |y| \leq |x|}}    |y|^{-\frac{2}{\theta}} \, dy \, dx \\ 
  & \qquad   \qquad   \qquad + \  \int_{ y \in \mathbb{R}\backslash B_{1}(0,1) }  |y|^{\frac{s}{\theta}-d-\frac{2}{\theta}}  \int_{\substack{ x \in \mathbb{R}^{d-1} : \\
  1\leq |x| \leq |y|}}    |x|^{\frac{2-d}{\theta}}  \, dx \, dy\\
  &\lesssim \int_{x\in \mathbb{R}^{d-1}\backslash B_{d-1}(0,1)}|x|^{\frac{s}{\theta}-d+\frac{2-d}{\theta}}  \, dx  \  + \  \int_{ y \in \mathbb{R}\backslash B_{1}(0,1) }  |y|^{\frac{s}{\theta}-d-\frac{2}{\theta}+(d-1+\frac{2-d}{\theta}) \vee 0}  \, dy\\
  &< \infty
  \end{align*}
  provided
  \[
  \tfrac{s}{\theta}-d+\tfrac{2-d}{\theta} < 1-d
  \]
  and
  \[
  \tfrac{s}{\theta}-d-\tfrac{2}{\theta}+\big(d-1+\tfrac{2-d}{\theta}\big) \vee 0 < -1,
  \]
  that is, provided
  \[
  s <   \min\Big\{ 2+ (d-1)\theta   ,  \ (d-2) + \theta   \Big\}
  \]
  which is consistent with the  desired lower bound. 
  
  Turning to the second integral,   using  \eqref{hard}
  \begin{align*}
  I_2 &\lesssim \int_{\substack{(x,y)\in \mathbb{R}^{d-1} \times \mathbb{R}   : \\
  \frac{|x|}{|y|} \in (1/2,2)\text{ and }  
  |y  \pm  |x| | \geq 1}}  \Big( |x|^{\frac{2-d}{2}}|y  \pm  |x| |^{-1}\Big)^{\frac{2}{\theta}}  |x|^{\frac{s}{\theta}-d} \, dx \, dy \\
  &\lesssim \int_{\mathbb{R}^{d-1} \setminus B_{d-1}(0,1)}   |x|^{\frac{2+s-d}{\theta} -d} \int_{\substack{y \in \mathbb{R} :\\
 \frac{|x|}{|y|} \in (1/2,2)\text{ and }  
  |y  \pm  |x| | \geq 1}} |y  \pm  |x| |^{- \frac{2}{\theta}} \, dy  \, d x \\
  &\lesssim \int_{\mathbb{R}^{d-1} \setminus B_{d-1}(0,1)}   |x|^{\frac{2+s-d}{\theta} -d} \int_1^{|x|} \alpha^{- \frac{2}{\theta}} \, d\alpha  \, d x \\
  &\lesssim \int_{\mathbb{R}^{d-1} \setminus B_{d-1}(0,1)}   |x|^{\frac{2+s-d}{\theta} -d}   \, d x \\
  &<\infty
  \end{align*}
  provided
  \[
\frac{2+s-d}{\theta} -d < 1-d,
  \]
  that is, provided
  \[
  s <(d-2) + \theta 
  \]
  which is consistent with the  desired lower bound.
  
  Finally, for the third integral, using \eqref{uniform},
  \begin{align*}
  I_3 &\lesssim  \int_{x \in \mathbb{R}^{d-1} \setminus B_{d-1}(0,1)} \int_{\substack{y \in  \mathbb{R}   : \\
   |y  \pm  |x| | < 1}}  |x|^{\frac{2-d}{\theta}}  |x|^{\frac{s}{\theta}-d} \, dy \, dx \\ 
  &\lesssim  \int_{x \in \mathbb{R}^{d-1} \setminus B_{d-1}(0,1)}    |x|^{\frac{2+s-d}{\theta}-d}  dx \\ 
  &< \infty
  \end{align*}
  provided
  \[
  \tfrac{2+s-d}{\theta}-d < 1-d,
  \]
  that is, provided
  \[
  s <(d-2) + \theta 
  \]
  which is consistent with the  desired lower bound. Combining our estimates for $I_1$, $I_2$ and $I_3$, gives  the desired lower bound.

  We turn our attention to the upper bound. Fix $\eps \in (0,0.01)$  small.  Then
  \begin{align*}
  \J_{s,\theta}( \nu_{d-1})^{1/\theta} &=\int_{\mathbb{R}^{d-1}} \int_{\mathbb{R}}  
  |\widehat{\nu_{d-1}} (x,y) |^{\frac{2}{\theta}} 
   (|x| \vee |y|)^{\frac{s}{\theta}-d}\, dy \, dx \\
   &=\int_{\mathbb{R}^{d-1}} \int_{\mathbb{R}}  
  \left\lvert \int_{0}^{1}  v^{d-2} \widehat{ \sigma_{d-2}}(xv)   e^{-2\pi i  y \cdot v}  \, dv    \right\rvert^{\frac{2}{\theta}} 
   (|x| \vee |y|)^{\frac{s}{\theta}-d}\, dy \, dx \\
   &\geq   \int_{\mathbb{R}^{d-1}\setminus B_{d-1}(0,1)}  \int_{B_1(0,\eps)}  
  \left\lvert \int_{0}^{1}  v^{d-2} \widehat{ \sigma_{d-2}}(xv)   e^{-2\pi i  y \cdot v}  \, dv    \right\rvert^{\frac{2}{\theta}} 
   |x|^{\frac{s}{\theta}-d}\, dy \, dx \\
  & \gtrsim_\eps \int_{\mathbb{R}^{d-1} \setminus B_{d-1}(0,1)}    \left\lvert \int_{0}^{1}  v^{d-2} \widehat{ \sigma_{d-2}}(xv)    \, dv    \right\rvert^{\frac{2}{\theta}} |x|^{\frac{s}{\theta}-d}   \, dx \\
  & \approx \int_{\mathbb{R}^{d-1} \setminus B_{d-1}(0,1)}    |x|^{-\frac{d-2}{\theta}} |x|^{\frac{s}{\theta}-d}   \, dx \\
  &=\infty
  \end{align*}
  whenever $-\frac{d-2}{\theta} + \frac{s}{\theta}-d \geq -(d-1)$, proving $\fs \nu_{d-1} \leq  (d-2)+\theta$.  Similarly,
  \begin{align*}
  \J_{s,\theta}( \nu_{d-1})^{1/\theta} &\geq   \int_{\mathbb{R}\setminus B_{1}(0,1)}  \int_{B_{d-1}(0,\eps)}  
  \left\lvert \int_{0}^{1}  v^{d-2} \widehat{ \sigma_{d-2}}(xv)   e^{-2\pi i  y \cdot v}  \, dv    \right\rvert^{\frac{2}{\theta}} 
   |y|^{\frac{s}{\theta}-d} \, dx\, dy \\
  & \gtrsim_\eps   \int_{\mathbb{R}\setminus B_{1}(0,1)}   
  |y|^{-\frac{2}{\theta}} 
   |y|^{\frac{s}{\theta}-d}\, dy   \\
  &=\infty
  \end{align*}
  whenever $-\frac{2}{\theta} + \frac{s}{\theta}-d \geq -1$, proving $\fs \nu_{d-1} \leq  2+(d-1)\theta$.  This completes the proof.

\section{Application 4: Restriction for the moment curve} \label{sec:moment}

In this section we consider another important example in restriction theory and consider how our results compare with the Stein--Tomas range.  For $d \geq 2$, let the moment curve (or Veronese curve) be defined by
\[
V^d\{(t, t^2/2, \dots, t^d/{d!}): t \in [0,1]\} \subseteq \rd
\]
and let $\nu$ denote the  pushforward of Lebesgue measure onto the curve. Thus, $\nu$ is---up to a bi-Lipschitz density---the arclength  measure on $V^d$.  Despite being a 1-dimensional curve embedded in high ambient dimension,  one can still  get good estimates for Fourier decay and restriction due to the curvature.   It is known that the extension estimate 
\begin{equation} \label{eq:extensionmoment}
    \|\widehat{f\mu}\|_{L^{q}(\rd)}\lesssim \|f\|_{L^p(\mu)}
\end{equation}
holds if and only if 
 \begin{equation} \label{eq:momentconj}
  q \geq    \frac{d(d+1)p'}{2} \quad \text{and} \quad q>\frac{d^2+d+2}{2},
  \end{equation}
  and for $p=2$ if and only if  $q \geq d^2+d$; see \cite{drury}. We are most grateful to Jonathan Hickman for both suggesting the moment curve as an example to consider and completely solving it for us by pointing out that an explicit formula for the Fourier spectrum follows from estimates in \cite{bggist} for the averages
\[
G_\rho(R) := \bigg( \int_{S^{d-1}} |\widehat \nu(R \omega) |^\rho\, d\sigma_{d-1}(\omega) \bigg)^{1/\rho}.
\]
In particular, setting $\rho=\frac{2}{\theta}$ and using polar coordinates,
\[
\mathcal{J}_{s,\theta}(\nu)^{1/\theta} = \int_0^\infty R^{\frac{s}{\theta}-1} G_{\frac{2}{\theta}}(R)^{\frac{2}{\theta}} \, dR
\]
and applying \cite[Theorems 1.2--1.3, for $K=d$]{bggist}, we get the following. This example is noteworthy because we have not yet seen a natural example exhibiting multiple phase transitions. 
\begin{prop} \label{prop:moment}
Let $d \geq 2$ be an integer and $\nu$ be the arclength measure on the moment curve.  Then
\[
\fs \nu = \min_{k=2, \dots, d} \ \frac{2}{k} + \frac{k^2-k-2}{2k} \theta.
\]
In particular, $\fd \nu = 2/d$, the Fourier spectrum coincides with the Hausdorff dimension of the curve for $\theta \geq 1/2$ and the Fourier spectrum has $d-2$ phase transitions occurring at 
\[
\theta = \frac{4}{k^2-k+2} \qquad (k=3, \dots, d).
\]
Finally, $\fs \nu >d \theta $ holds for all 
\[
0\leq  \theta < \frac{4}{d^2+d+2}.
\]
\end{prop}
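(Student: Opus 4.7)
The plan is to follow the recipe outlined in the paragraph preceding the proposition: convert the $(s,\theta)$-energy to a polar integral involving the spherical $L^\rho$ averages $G_\rho$, and then invoke the sharp piecewise estimates of \cite{bggist}. Since $\nu$ is compactly supported, $\widehat\nu$ is smooth and bounded, so the ball $|\xi|\leq 1$ contributes only an $O(1)$ term and
\[
\J_{s,\theta}(\nu)^{1/\theta} \approx 1 + \int_1^\infty R^{s/\theta - 1} G_{2/\theta}(R)^{2/\theta}\, dR.
\]

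Next I would apply \cite[Theorems~1.2--1.3, with $K=d$]{bggist}, which give sharp two-sided bounds of the form $G_\rho(R) \approx R^{-\gamma_k(\rho)}$ on the moment curve, where $k\in\{2,\ldots,d\}$ indexes a piecewise-linear regime of $\rho$ and $\gamma_k$ is an explicit linear exponent. Substituting $\rho = 2/\theta$ and testing convergence of the polar integral at $R=\infty$, the finiteness of $\J_{s,\theta}(\nu)$ becomes a single linear-in-$\theta$ constraint on $s$ for each $k$, whose threshold is precisely $s = \tfrac{2}{k} + \tfrac{k^2-k-2}{2k}\theta$. The supremum of admissible $s$ is therefore the pointwise minimum of these linear functions, and the matching lower bounds on $G_\rho$ in \cite{bggist} ensure the minimum is not improved. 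This establishes the claimed formula for $\fs \nu$.

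Given the explicit formula, the four remaining assertions are elementary algebra. Setting $\theta = 0$ yields $\fd \nu = \min_k (2/k) = 2/d$. Equating the $(k-1)$-th and $k$-th lines produces the phase transitions at $\theta = 4/(k^2 - k + 2)$ for $k = 3,\ldots,d$; the smallest such value, $1/2$, occurs at $k=3$, and for $\theta \geq 1/2$ the pointwise minimum is attained along the $k=2$ line, which is the constant function $1$, matching the Hausdorff dimension of the curve. Finally, on the smallest interval $\theta\in [0, 4/(d^2-d+2))$ the minimum is realised by the $k=d$ line, and the inequality $\tfrac{2}{d} + \tfrac{d^2-d-2}{2d}\theta > d\theta$ rearranges to $\theta < 4/(d^2+d+2)$.

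The main obstacle is extracting the correct piecewise description from \cite{bggist}, that is, matching each regime of $\rho$ there to the expected threshold $\tfrac{2}{k} + \tfrac{k^2-k-2}{2k}\theta$ after substituting $\rho = 2/\theta$, and verifying that the $d-1$ pieces there correspond exactly to $k = 2,\ldots,d$ here. Once this identification is made, every other step reduces to routine bookkeeping with linear functions.
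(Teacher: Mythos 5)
Your proposal follows essentially the same route as the paper, whose own argument is exactly the polar-coordinate identity $\J_{s,\theta}(\nu)^{1/\theta}=\int_0^\infty R^{s/\theta-1}G_{2/\theta}(R)^{2/\theta}\,dR$ combined with the sharp piecewise bounds of \cite[Theorems~1.2--1.3, $K=d$]{bggist}, followed by the same elementary optimisation over the resulting family of lines. One small slip in wording: $\theta=4/(k^2-k+2)$ is decreasing in $k$, so $1/2$ (at $k=3$) is the \emph{largest} transition point rather than the smallest, though your conclusion that the $k=2$ line (the constant $1$) realises the minimum for $\theta\geq 1/2$ is correct.
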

In fact \cite{bggist} treats much more general curves, in particular, allowing less curvature than the moment curve.  By carefully examining their results, one can derive the Fourier spectrum more generally, but we leave this to the interested reader. 

  \begin{figure}[H]
    \begin{tikzpicture}[scale = 0.8]
      \begin{axis}[
          axis lines = left,
          xmin = 0,
          xmax = 1.05,
          ymin= 0,
          ymax = 1.2,
          ytick = {0, 0.5,1},
          yticklabels = {$0$, $0.5$,$1$},
          xlabel=$\theta$,
          xtick = {0.2,0.4,0.6,0.8,1},
          xticklabels = {$0.2$,$0.4$,$0.6$,$0.8$,$1$},
      ]
     
        \addplot [
          domain=0:1, 
          thick,
          samples=100, 
      ]
      {min(2/2+(2^2-2-2)*x/(2*2), 2/3+(3^2-3-2)*x/(2*3),2/4+(4^2-4-2)*x/(2*4),2/5+(5^2-5-2)*x/(2*5),2/6+(6^2-6-2)*x/(2*6),2/7+(7^2-7-2)*x/(2*7),2/8+(8^2-8-2)*x/(2*8))};
     
      \end{axis}
  \end{tikzpicture}
  \caption{The Fourier spectrum of the arclength measure on the moment curve in $\mathbb{R}^8$; see Proposition~\ref{prop:moment}.  There are 6 phase transitions and the Fourier dimension is 1/4.}
\end{figure}
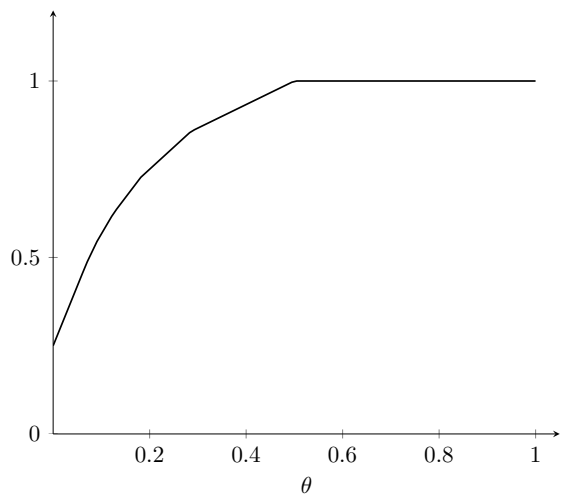

 Let us now examine  extension estimates coming from Proposition \ref{prop:moment}. For $p=2$, the Stein--Tomas range is
\[
q \geq 2d^2-2d+2
\]
and Theorem \ref{thm:mainthm} obtains the range
\[
q>d^2+d+2,
\]
which is the sharp bound plus 2. Moreover, for $d \geq 4$ the range in Theorem \ref{thm:mainthm} is optimised (uniquely) at $\theta =  \frac{4}{d^2+d+2}$.  On the other hand, Theorem \ref{converse} shows that the general extension estimate \eqref{eq:extensionmoment}  fails for 
\[
q<\frac{d^2+d+2}{2},
\]
whereas applying the Hambrook--{\L}aba bound only gives  failure for $q<d$. It is perhaps interesting to note that  the bound coming from Theorem \ref{converse} is sharp and provides the right-hand side estimate in \eqref{eq:momentconj}. This was also the case for the cone, recall Conjecture \ref{coneconj}, and the sphere, recall the discussion after Theorem \ref{converse}.

  \begin{figure}[H]
    \begin{tikzpicture}[scale=0.8]
      \begin{axis}[
        axis lines = left,
        xmin = 3,
        xmax = 8.2,
        ymin= 1.8,
        ymax = 114.2,
        ytick = {20, 40, 60, 80, 100},
        yticklabels = {$20$, $40$, $60$, $80$, $100$},
        xlabel=$d$,
        xtick = {3,4, 5, 6, 7, 8},
        xticklabels = {$3$,$4$,$5$,$6$,$7$,$8$},
legend style={at={(axis cs:3.5,75)},anchor=south west},
    ]
   
    \addplot [ %Stein--Tomas
        domain=3:8, 
        thick,
        samples=100, 
        % purp,
        very thick,
        dashed
    ]
    {2*x^2-2*x+2};
    \addplot [ %CFdO
    domain=3:8, 
    very thick,
    samples=100,
    % lightblue,
    ]
    {x^2+x+2};

    \addplot [ %Sharp
    domain=3:8, 
    very thick,
    samples=100, 
    dotted,
    % yellowcolour,
    ]
    {x^2+x};

    \addplot [ %CFdOlower
    domain=3:8, 
    very thick,
    samples=100, 
    % lightblue,
    ]
    {(x^2+x+2)/2};

    \addplot[ %HL lower
        domain=3:8,
        very thick,
        samples=100,
        dashed,
        % purp,
    ]
    {x};

    \legend{Stein--Tomas, Theorem~\ref{thm:mainthm}, Sharp result,Theorem \ref{converse}, Hambrook--{\L}aba}
    \end{axis}
    \end{tikzpicture}
    \caption{Bounds for the range of $q$ for the extension estimate \eqref{eq:extension} to hold for the moment curve in $\rd$. The dashed lines are the  Stein--Tomas upper bound and the Hambrook--{\L}aba  lower bound, the dotted line is the sharp result, and the solid lines are our upper and lower bounds for the threshold.  These plots should be understood as only applying to integer points in the domain, but we included the full curve for aesthetic reasons.}\label{fig:restrictionmoment}
  \end{figure}
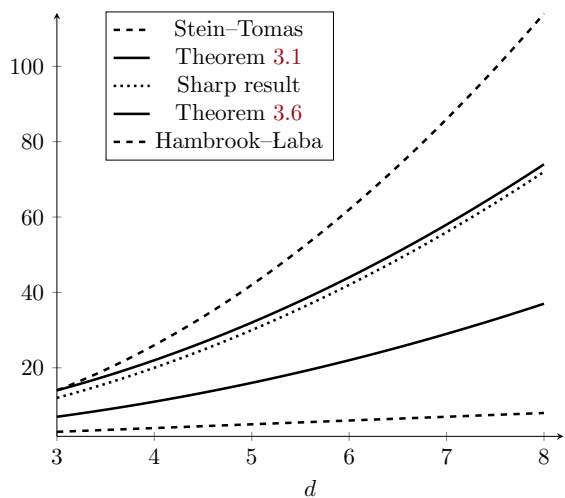

\section{Application 5: Restriction on fractals} \label{sec:fractals}

\subsection{Multifractal measures on the $1/3$-Cantor set} \label{sec:exampleCantor}

  We saw in Lemma~\ref{lma:decay} that it does not make sense to restrict the Fourier transform to measures that do not have Fourier decay. That said, examples of multifractal measures with positive Fourier dimension abound. Indeed, in \cite{Sol21} Solomyak proved that outside of a set of parameters of Hausdorff dimension zero, all self-similar measures supported on the line have positive Fourier dimension. However, finding explicit examples of such measures can be very difficult, and since we do not know of any, we will  add a small amount of `noise' to a concrete family of multifractal measures, which will result in a new family that will exhibit both multifractal behaviour and polynomial Fourier decay.

  Let $p>1/2$ and $\mu_{p}$ be the (self-similar) measure on the $1/3$ Cantor set associated to  probabilities $p$ and $(1-p)$. Then, by direct calculation or appealing to the extensive literature e.g. \cite{falconer},
  \begin{equation*}
  \fd \mu_p = 0;\quad  \frd\mu_{p} = \frac{\log p }{-\log 3};\quad \sd\mu_{p} = \frac{\log(p^2 +(1-p)^2)}{-\log3}.
  \end{equation*}
  Let $\varepsilon>0$ and $\nu_{\varepsilon}$ be a Salem measure of dimension $\varepsilon$.
  
  Consider the measure $ m = \mu_{p}*\nu_{\varepsilon}$. By \cite[Theorem~6.1]{Fra24}, for all $\theta\in[0,1]$, $\fs m \geq \fs\mu_p + \varepsilon$.

  If $\fd^{1/2}m >1/2$, we may fix $\theta = 1/2$ in Theorem~\ref{thm:mainthm}, which gives the range 
  \begin{equation}\label{eq:Cantorhalf}
      q > 2 + \frac{2(1-\alpha)\big(2-\tfrac{1}{2}\big)}{\fd^{1/2}m - \tfrac{\alpha}{2}} = 2 + \frac{6(1-\alpha)}{2\fd^{1/2}m - \alpha}
  \end{equation}
  for the restriction estimate to hold, where $\alpha = \frd m$.
  
  Since $\mu_{p}*\mu_{p}$  is a self-similar measure satisfying the open set condition with a system of $3$ maps and weights $p^2,~(1-p)^2$ and $2p(1-p)$, then the Fourier spectrum of $\mu_{p}$ at $\theta = 1/2$ is, by \cite[Lemma~6.2]{Fra24},
  \begin{equation*}
      \fd^{1/2}\mu_{p} = \frac{\sd(\mu_{p}*\mu_{p})}{2} = \frac{\log\big( p^4 + (2p(1-p))^2 + (1-p)^4 \big)}{-2\log3}.
  \end{equation*}
  Therefore,
  \begin{equation*}
      \fd^{1/2} m \geq \frac{\log\big( p^4 + (2p(1-p))^2 + (1-p)^4 \big)}{-2\log3}+ \varepsilon,
  \end{equation*}
  where we choose $\varepsilon$ small, but large enough so that $\fd^{1/2}m >1/2$.
  
  To bound $\alpha = \frd m$ from below let $\beta< \frd\mu$, $\delta<\varepsilon=\frd \nu_\eps$, $x\in\R$ and $r>0$.  Then
  \begin{align*}
    m\big( B(x,r) \big) &= \iint 1_{B(x,r)}(u+v)\,d\mu(u)\,d\nu_\eps(v)\\
      &= \int_{B(x,r)}\int_{B(x,r)-v}\,d\mu(u)\,d\nu_\eps(v)\\
      &= \int_{B(x,r)} \mu\big( B(x-v,r) \big)\,d\nu_\eps(v)\\
      &\lesssim r^{\beta + \delta},
  \end{align*}
 and  letting $\beta\to\frd\mu$ and $\delta\to\varepsilon$ shows that $\alpha\geq\frd\mu_{p} + \varepsilon$.
  
  Since $\fd^{1/2}m>1/2$, as a function of $\alpha$, $\frac{6(1-\alpha)}{2\fd^{1/2}m - \alpha}$ is decreasing. Replacing the obtained bounds on the right-hand side of \eqref{eq:Cantorhalf} we get that restriction is possible if 
  \begin{equation}\label{eq:exampleCFdO}
q>    2 + \frac{6(\log3 + \log p - \varepsilon\log 3)}{\log p + \varepsilon\log 3 - \log(p^4 + (2p(1-p))^2 + (1-p)^4)} .
  \end{equation}
  
  On the other hand,  Stein--Tomas gives the range
  \begin{equation}\label{eq:exampleST}
   q>  2 + \frac{4(\log 3 + \log p - \varepsilon\log3)}{\varepsilon\log3}.
  \end{equation}
  
% For example, if $p = 0.6$, to have $\frd m >1/2$ we need $\varepsilon \geq 0.036$. For this value of $\varepsilon$, the range of \eqref{eq:exampleCFdO} is $q>8.83$ and Stein--Tomas' \eqref{eq:exampleST} is $q>57.45$. 
For example, if $p = 0.6$, to have $\fd^{1/2} m >1/2$ we need $\varepsilon \geq 0.067$. Our results would still apply for smaller $\varepsilon$ (and always give a non-trivial range for restriction, see Subsection \ref{sec:khalil}) but we would need to use a smaller value of $\theta$.  The problem is then that we do not know the Fourier spectrum of $\mu_p$ explicitly  other than at $\theta=1/2$.  For this threshold value of $\varepsilon=0.067$, the range of \eqref{eq:exampleCFdO} is $q>7.99$ and the Stein--Tomas range  \eqref{eq:exampleST} is $q>29.95$.

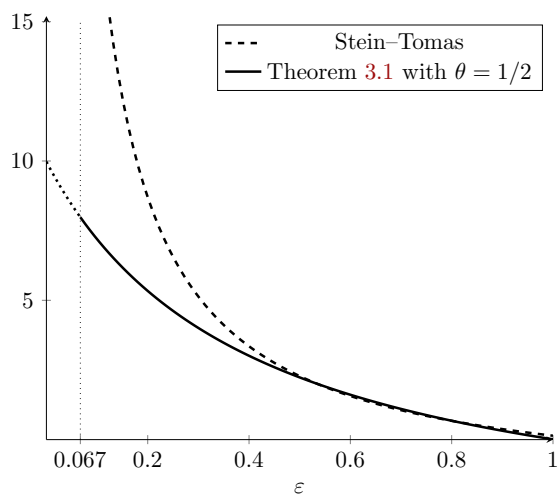
\begin{figure}[H]
  \begin{tikzpicture}[scale = 0.8]
    \def\p{0.6}
    \begin{axis}[
        axis lines = left,
        xmin = 0,
        xmax = 1,
        ymin= 0,
        ymax = 15.2,
        ytick = {5, 10, 15},
        yticklabels = {5,10,15},
        xlabel=$\varepsilon$,
        xtick = {0.067, 0.2,0.4,0.6,0.8,1},
        xticklabels = {$0.067$,$0.2$,$0.4$,$0.6$,$0.8$,$1$},
    ]

    \addplot [ %THIS IS STEIN--TOMAS (equation 6.3)
        domain=0:1, 
        very thick,
        dashed,
        samples=100, 
    ]
    {2 + ( 4*( ln(3) + ln(\p) - x*ln(3) ) )/( x*ln(3) )};

    \addplot [ %THIS IS CFdO (equation 6.2)
        domain=0.067:1, 
        very thick,
        samples=100, 
    ]
    {2 + ( 6*( ln(3) + ln(\p) - x*ln(3) ) )/( ln(\p) + x*ln(3) - ln((\p)^4 + (2*\p*(1-\p))^2 + (1-\p)^4) )};

    \legend{Stein--Tomas, Theorem~\ref{thm:mainthm} with $\theta = 1/2$}

    \addplot [ %THIS IS CFdO (equation 6.2)
        domain=0:0.067, 
        very thick,
        dotted,
        samples=100, 
    ]
    {2 + ( 6*( ln(3) + ln(\p) - x*ln(3) ) )/( ln(\p) + x*ln(3) - ln((\p)^4 + (2*\p*(1-\p))^2 + (1-\p)^4) )};

    \addplot[
        domain=0:1,
        style=dotted,
      ] coordinates {(0.067,0)(0.067,15)};

    \end{axis}
\end{tikzpicture}
\caption{Lower bound on the range of $q$ for the restriction estimate \eqref{eq:extension} to hold for the measure $\mu_{0.6}*\nu_{\varepsilon}$, as a function of $\varepsilon$. Note that the bound obtained from Theorem~\ref{thm:mainthm} is only valid for values of $\varepsilon$ greater than $0.067$.}\label{fig:fractalExample}
\end{figure}

Curiously, if in the previous example we set $\varepsilon=0.7$, then from Figure~\ref{fig:fractalExample} or by direct calculation, we can see that the Stein--Tomas range \eqref{eq:exampleST} is better than the one given by Theorem~\ref{thm:mainthm} with $\theta = 1/2$, \eqref{eq:exampleCFdO}. This implies that for such values of $\varepsilon$, the improvement on the Stein--Tomas result will come from another value $\theta < 1/2$; see previous discussion and subsequent section.

\subsection{Measures on a large family of fractals} \label{sec:khalil}

Here we consider a variant on the family of examples from the previous section where we replace the concrete family of self-similar measures $\mu_p$ with a large family of measures satisfying a mild non-concentration condition (certainly the measures $\mu_p$ are in this family as well as many other examples). As in the previous example, these measures do not   have polynomial Fourier decay, and so in order to make sense of restriction we will convolve them with a Salem measure of small dimension. If we make this `noise'  small, we  get better restriction estimates than those  provided by Stein--Tomas'.

Let $\mu$ be a non-negative, finite, compactly supported, Borel measure on $\rd$ which satisfies:  $\fd\mu = 0$, $\sd \mu < d$, and that the upper right semi-derivative of $\theta \mapsto \fs\mu$ at $\theta =0$ is $d$.  In particular, since $\fs \mu \leq \fd \mu+ d \theta$ this derivative condition says that the Fourier spectrum has the largest possible growth at $\theta=0$. This seemingly artificial condition turns out to be satisfied very generally due to work of Khalil \cite{Kha23}. We say that a Borel measure $\mu'$ on $\rd$ is uniformly affinely non-concentrated if there exist $s>0$, $C\geq 1$ such that for every $\delta>0$, $x\in\rd$, $0<r\leq 1$ and  hyperplane $W\subseteq\rd$,
\begin{equation*}
    \mu'\big( W^{(\delta r)}\cap B(x,r) \big)\leq C \delta^s\mu\big( B(x,r) \big)
\end{equation*}
where $W^{(\delta r)}$ is the $\delta r$ neighbourhood of $W$.  In particular, many families of dynamically defined measures satisfy this condition. It turns out that if such a measure $\mu'$ has Fourier dimension zero, then it satisfies our derivative condition. Indeed, by \cite[Lemma~6.2]{Fra24} and \cite[Theorem~1.6]{Kha23},
\begin{equation*}
    \lim_{\theta\to0}\frac{\fs\mu'}{\theta} = d.
\end{equation*}
Let $\varepsilon>0$ be small and $\nu_{\varepsilon}$ be a Salem measure on $\rd$ of dimension $\varepsilon$. We consider the measure $m = \mu *\nu_{\varepsilon}$ and our goal is to establish a restriction estimate for $m$ with an optimal range as $\varepsilon \to 0$. Since we assume $\fd \mu = 0$, we cannot do any better in general than the estimate $\fd m \geq \varepsilon$ and therefore the Stein--Tomas range for restriction  \eqref{eq:restrictionp2} to hold is 
\begin{equation*}\label{eq:STKhalil}
    q>2 + \frac{4(d-\frd m)}{\varepsilon} =  \Omega(\varepsilon^{-1}).
\end{equation*}
Similarly, considering the Fourier spectrum, the best estimate we can get in general  is $\fs m \geq \fs \mu + \varepsilon$.  Let $\theta_0 \in (0,1)$ be chosen such that $\dim_\textup{F}^{\theta_0}  \mu + \varepsilon= d\theta_0$, noting that this choice is unique.  Then Corollary \ref{cor:restrictionSpectrum} gives that restriction  \eqref{eq:restrictionp2} holds for
\begin{equation*}
  q> \frac{4}{\theta_0} = o(\varepsilon^{-1}).
\end{equation*}
In particular, we obtain an asymptotically better range for restriction as $\varepsilon \to 0$. The fact that $\theta_0^{-1} = o(\varepsilon^{-1})$ follows since the    upper right semi-derivative of $\fs\mu$ at $\theta =0$ is $d$.   That is, for every $\eta>0$, there will exist $\theta>0$ such that   $(d - \eta)\theta<\fs\mu$ and thus $\theta_0>\varepsilon/\eta$ for $\varepsilon$ small enough.

In certain situations we can say more.  For example, if $\fs\mu$ is twice (right) continuously differentiable at $\theta =0$  (and still assuming that the first right derivative is $d$) then, by Taylor's theorem, 
\[
\fs \mu = d\theta - |O(\theta^2)|
\]
and so Corollary \ref{cor:restrictionSpectrum} gives that restriction  \eqref{eq:restrictionp2} holds for
\begin{equation*}
  q> \frac{4}{\theta_0} = O(\varepsilon^{-1/2}).
\end{equation*}

\section*{Acknowledgements}

We thank Jonathan Hickman, Chun-Kit Lai, and Bochen Liu  for helpful discussions and suggestions.

% \bibliographystyle{alpha}
% \bibliography{/Users/anaemiliadeorellana/Library/CloudStorage/OneDrive-UniversityofStAndrews/Research/Bibliography.bib}
\end{document}